\DeclareMathOperator{\Aut}{Aut}
\DeclareMathOperator{\LHC}{\text{LHC}}
\DeclareMathOperator{\HS}{\text{HS}}
\DeclarePairedDelimiter\floor{\lfloor}{\rfloor}
\theoremstyle{plain}
\newtheorem{theorem}{Theorem}[section] 
\newtheorem{lemma}[theorem]{Lemma}
\newtheorem{proposition}[theorem]{Proposition}
\newtheorem{corollary}[theorem]{Corollary}
\newtheorem{definition}[theorem]{Definition}
\theoremstyle{definition}
\newtheorem{example}[theorem]{Example}
\newtheorem{remark}[theorem]{Remark}
\newtheorem{fact}[theorem]{Fact}
\begin{document}

\title{Endomorphisms of Cuboidal Hamming Graphs, Latin~Hypercuboids of Class $r$, and Mixed MDS Codes}
\author{Artur Schaefer\\
  {\small Mathematical Institute, University of St Andrews}\\
  {\small North Haugh, St Andrews KY16 9SS, UK}\\
  {\small  as305@st-andrews.ac.uk}}
\date{}
\maketitle

\begin{abstract}
 In this paper we investigate the existence of singular endomorphisms of the cuboidal Hamming graph $H(n_1,...,n_d,S)$ over the set $\left[ n_1\right]\times \left[ n_2\right]\times \cdots \times \left[ n_d\right]$, where $\left[ n\right]=\{1,...,n\}$, which is a generalisation of the well-known (cubic) Hamming graph over $\left[ n\right]^{d}$. Two vertices in $H$ are adjacent, if their Hamming distance lies in the set $S$. In this paper $S=\{1,...,r\}$, for some integer $1\leq r\leq d-1$, and we first show that the singular endomorphisms of minimal rank ( which is the size of their image) of $H(n,...,n,S)$ correspond to Latin hypercubes of class $r$ (those were originally defined by Kishen \cite{kishen50}). Then we generalise those hypercubes to Latin hypercuboids of class $r$. We discuss the existence of these objects, provide constructions and count Latin hypercuboids for small parameters. In the last part, we extend the natural connection between Latin hypercubes of class $r$ and MDS codes to Latin hypercuboids of class $r$ leading to the definition of MDS codes for mixed codes (mixed MDS codes), that is for codes over hypercuboids. Here, we demonstrate the interdependence between graph endomorphisms, Latin hypercuboids and mixed MDS codes.
\end{abstract}

\section{Introduction}\label{section1}

The motivation to this research comes from synchronization theory; in particularly, the study of synchronizing (permutation) groups, where a group $G$ is called synchronizing, if for every map $f$ on $n$ points the semigroup $\langle G,f\rangle$ contains a constant map. Cameron and Kazanidis \cite{pjc08} proved that the study of synchronizing permutation groups is equivalent to the study of graph endomorphisms of graphs admitting a complete core (clique number equal to chromatic number), where a graph homomorphism is mapping edges to edges.

The Hamming graph $H(d,n)$ has a complete core. In general, a Hamming graph $H(d,n,S)$ is a graph whose vertices are tuples in $\left[ n\right]^{d}$ where two vertices are adjacent, if their Hamming distance is in the set $S$, for $S\subseteq \{1,...,d-1\}$. The Hamming graph occurs for $S=\{1\}$, but we simply write $H(d,n)$. In \cite{schaeferhamminggraph} it is shown that the singular endomorphisms of $H(d,n,S)$, for $S=\{1,...,r\}$, have ranks $n^{k}$, for some $k=r,...,d-1$. In particular, the singular endomorphisms of $H(d,n)$ of rank $n^{r}$ are induced by Latin hypercubes. A natural generalisation of $H(d,n,S)$ is the graph $H(n_1,...,n_d,S)$; its vertices are tuples in $\left[ n_1\right]\times \left[ n_2\right]\times \cdots \times \left[ n_d\right]$ and two vertices are adjacent if their Hamming distance is in the set $S\subseteq \{1,...,d-1\}$. This graph is called the cuboidal Hamming graph (with respect to $S$).

In this article, we check the existence of singular endomorphisms of cuboidal Hamming graphs $H(n_1,...,n_d,S)$, where $S=\{1,...,r\}$, by investigating the existence of Latin hypercuboids of class $r$. The more specific Latin hypercubes of class $r$ have been initially defined by Kishen \cite{kishen50} and pose a further generalisation of Latin squares and Latin cubes, whereas here we define the more general Latin hypercuboids of class $r$. But before we get to that, we introduce the graph theoretical background and establish that the endomorphisms of minimal rank of $H(d,n,S)$ correspond to Latin hypercubes of class $r$. This correspondence is then lifted to the cuboidal version in Section \ref{section3}, which is dedicated to the analysis of Latin hypercuboids of class $r$. Not only are we going to provide an existence condition or rather a bound on the parameters of Latin hypercuboids, but also two constructions and a table with the numbers of Latin hypercuboids for small parameters. Then, in Section \ref{section4}, which is a prequel to Section \ref{section5}, we introduce mixed codes (cf. \cite{brouwer98}). We generalise the Singleton, Hamming and Plotkin bounds to mixed codes and define mixed MDS codes. Ultimately, in Section \ref{section5} we establish the interdependence between the singular endomorphisms, Latin hypercuboids and mixed MDS codes. 

%

\section{Endomorphisms of cuboidal Hamming graphs}\label{section2}

Cuboidal Hamming graphs are generalising Hamming graphs in the same way that hypercuboids are generalising hypercubes. This research is essentially continuing the research in \cite{schaeferhamminggraph}. The question we are tackling here is: For which parameters $n_1,...,n_d$ and $r$ does the cuboidal Hamming graph $H(n_1,...,n_d,S)$ admit singular endomorphisms, where $S=\{1,..,r\}$?  

In the case of Hamming graphs, that is the cubic case with $n_1=\cdots =n_d=n$ and $r=1$, it is known that these graphs always attain singular endomorphisms (because their clique number and chromatic number are identical). Moreover, in \cite{schaeferhamminggraph} the author proved that a singular endomorphism has rank $n^{k}$, is uniform (each kernel class has the same size), and is induced by a combination of Latin hypercubes of dimensions $d,d-1,...,d-k+1$, where $1\leq k \leq d-1$.
 
However, the existence of singular endomorphisms is not obvious anymore for cuboidal Hamming graphs. First of all, if a cuboidal Hamming graph $X$ attains a singular endomorphism $f$, then the semigroup $\langle \Aut(X),f\rangle$ is not synchronizing \cite{pjc08}, and consequently, it contains endomorphisms of minimal rank (which is strictly greater than $1$). So, it is sufficient to focus on the existence of endomorphisms of minimal rank.

\begin{fact}
 If there is no singular endomorphism of minimal rank, then there are no singular endomorphisms at all.
\end{fact}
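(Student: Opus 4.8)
The plan is to prove the contrapositive: assuming that the cuboidal Hamming graph $X=H(n_1,\dots,n_d,S)$ with $S=\{1,\dots,r\}$ admits \emph{some} singular endomorphism, exhibit one of minimal rank. The first thing to record is that for a finite graph an endomorphism is an automorphism exactly when it is injective (equivalently, surjective), so ``singular'' means ``non-injective'', and the rank $|\im f|$ of a singular endomorphism $f$ is an integer with $1\le|\im f|<|V(X)|$. Since $d\ge 2$ and $r\ge 1$ the graph $X$ has at least one edge and no loops, hence no endomorphism of $X$ can be constant; therefore every singular endomorphism of $X$ in fact has rank at least $2$.

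Next I would observe that the set $J$ of singular endomorphisms of $X$ is a two-sided ideal of the finite monoid $\End(X)$: for $g\in J$ and any $h\in\End(X)$ we have $\im(hg)=h(\im g)$ and $\im(gh)\subseteq\im g$, so both composites have rank at most $|\im g|<|V(X)|$ and are again singular. By hypothesis $f\in J$, so $J\neq\varnothing$, and the set of ranks $\{\,|\im g|:g\in J\,\}$ is a nonempty subset of $\{2,\dots,|V(X)|-1\}$; it therefore has a least element $m$, and any $g\in J$ with $|\im g|=m$ is a singular endomorphism of minimal rank. This already yields the Fact.

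For the synchronization-theoretic viewpoint that frames this section I would add the following. As $X$ has a complete core, the transformation semigroup $\langle\Aut(X),f\rangle$ (a subsemigroup of $\End(X)$) is non-synchronizing by Cameron--Kazanidis \cite{pjc08}; hence it contains a non-constant idempotent of least rank among its elements, which is an endomorphism of $X$ of minimal rank, and one may moreover take its image to meet every class of its kernel. This re-proves the Fact and is the point of view used below: to decide whether $X$ admits \emph{any} singular endomorphism it suffices to search for idempotent endomorphisms of minimal rank, which in Section~\ref{section3} are identified with those induced by Latin hypercuboids of class $r$. As stated, the Fact is essentially a formality (well-ordering of the set of attained ranks); the only points needing a little care are the exclusion of rank $1$ and invoking the correct non-synchronization statement, whereas computing the exact numerical value of the minimal rank is a separate matter, settled later through the Latin-hypercuboid correspondence.
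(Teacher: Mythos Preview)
Your main argument (the first two paragraphs) is correct and is in fact more elementary than what the paper does: the paper's justification is precisely the sentence preceding the Fact, which invokes Cameron--Kazanidis to conclude that $\langle \Aut(X),f\rangle$ is non-synchronizing and hence contains elements of minimal rank strictly greater than $1$. You instead observe directly that the set of ranks of singular endomorphisms is a nonempty finite set of integers at least $2$, so it has a minimum --- no synchronization theory is required. Your approach has the advantage of making transparent that the Fact is essentially a tautology about well-ordering, while the paper's phrasing situates it inside the synchronization framework used later.

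One small slip in your third paragraph: you write ``As $X$ has a complete core'', but the paper explicitly notes that cuboidal Hamming graphs need not satisfy $\omega(X)=\chi(X)$ (e.g.\ $H(3,3,2,2,2,\{1,2\})$). Fortunately this hypothesis is not needed: the semigroup $\langle \Aut(X),f\rangle$ is non-synchronizing simply because it lies in $\End(X)$ and $X$, having at least one edge, admits no constant endomorphism. So the conclusion of that paragraph stands, but the stated reason should be replaced by the observation you already made in your first paragraph.
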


Regarding graphs with clique number $\omega$ equal to the chromatic number $\chi$ we obtain the following triviality.

\begin{lemma}
 If a graph $X$ has $\omega(X)=\chi(X)$, then there are singular endomorphisms (of minimal rank).
\end{lemma}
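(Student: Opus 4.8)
The plan is to construct an explicit endomorphism of rank $\omega(X)$ directly from a minimum proper colouring together with a maximum clique. Write $k=\omega(X)=\chi(X)$, fix a proper colouring $c\colon V(X)\to\{1,\dots,k\}$, and fix a clique $K=\{v_1,\dots,v_k\}$ of size $k$. Since $c$ is proper and $K$ is a clique, the restriction $c|_K$ is injective, hence a bijection onto $\{1,\dots,k\}$; after relabelling the colours I may assume $c(v_i)=i$ for every $i$. In particular $c$ is surjective, that is, every colour occurs.

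Next I would define $f\colon V(X)\to V(X)$ by $f(v)=v_{c(v)}$ and verify that it is a graph endomorphism. If $\{u,w\}\in E(X)$ then $c(u)\neq c(w)$ since $c$ is a proper colouring, so $f(u)=v_{c(u)}$ and $f(w)=v_{c(w)}$ are two distinct vertices of the clique $K$ and are therefore adjacent; thus $f$ maps edges to edges. Because $c$ is surjective, the image of $f$ is $\{v_1,\dots,v_k\}=K$, so $f$ has rank exactly $k$.

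Finally I would observe that $k$ is the least rank any endomorphism of $X$ can have, so that $f$ is of minimal rank, and that $f$ is singular. Indeed, an arbitrary endomorphism $g$ is injective on $K$ (adjacent vertices receive distinct, adjacent images), whence $|\im g|\ge k$; and unless $X$ is itself the complete graph $K_k$ --- the degenerate case $|V(X)|=k$, in which every endomorphism is an automorphism --- we have $|V(X)|>k=|\im f|$, so $f$ cannot be injective.

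There is essentially no real obstacle here; this is the advertised triviality. The only point deserving a word of care is the degenerate case $X=K_k$, which is either excluded by hypothesis or dealt with as above. The construction also makes the parenthetical ``of minimal rank'' transparent: $f$ folds $X$ onto one of its maximum cliques along a proper colouring, and no endomorphism can reach fewer than $\omega(X)$ vertices.
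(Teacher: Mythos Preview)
Your argument is correct and is exactly the standard folklore construction: collapse $X$ onto a maximum clique along a minimum proper colouring. The paper itself does not supply a proof --- it simply labels the lemma a ``triviality'' --- so your proposal is precisely the kind of argument the reader is expected to fill in.

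One small remark: you are right to flag the degenerate case $X=K_k$, since for a complete graph there are no singular endomorphisms and the lemma as literally stated fails; the paper tacitly excludes this case (its whole discussion presupposes the existence of non-invertible endomorphisms), and your parenthetical treatment of it is adequate.
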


The problem with this lemma is that the converse does not hold, in general. A counter-example is given by the Cartesian product of two cyclic graphs $C_5$ which can be found in \cite{schaeferhulls}. Its singular endomorphisms of minimal rank correspond to certain Latin squares; however, this graph has clique number $2$ and chromatic number $3$. In the same paper, we see that one class of graphs for which the converse holds are graphs which are hulls. 

However, as already mentioned the situation is not as simple as for the Hamming graph, where $\omega=\chi$ is satisfied. For instance Table \ref{tablecountingLatinhypercuboids} indicates that the graph $H(3,3,2,2,2,\{1,2\})$ does not satisfy this equality, because it does not admit any singular endomorphism. But, let us consider the cubic case first, that is $H(d,n,S)$, for $S=\{1,...,r\}$.

First, we state the definition of Latin hypercubes of dimension $d$, order $n$ and class $r$. Our definition is based on Latin hypercubes of class $r$ given in \cite{ethier08}. 

\begin{definition}
 A $d$-dimensional Latin hypercube of order $n^{r}$ and class $r$ is an $n\times n\times \cdots \times n$ ($d$ times) array based on $n^{r}$ distinct symbols, each repeated $n^{d-r}$ times, such that each occurs exactly once in each $r$-subarray. We will write $\LHC(d,n,r)$ for such cubes. For $r=1$, these cubes coincide with the Latin hypercubes defined by McKay and Wanless \cite{mckay08}.
\end{definition}




\begin{proposition}[Lemma 6.4 \cite{schaeferhamminggraph}]
  The singular endomorphism of minimal rank of $H(d,n,\{1,...,r\})$ coincide with Latin hypercubes of dimension $d$ and class $r$.
\end{proposition}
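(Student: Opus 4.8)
The plan is to establish a bijection between the set of singular endomorphisms of minimal rank of $H(d,n,\{1,\dots,r\})$ and the set of $\LHC(d,n,r)$, by analysing the structure of such an endomorphism $f$ from both the colouring (vertex-partition) side and the clique (image) side.

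First I would recall the two structural facts coming from the referenced paper \cite{schaeferhamminggraph}: a singular endomorphism $f$ of $H(d,n,\{1,\dots,r\})$ has rank $n^{k}$ with $r\le k\le d-1$, is uniform, and maps $H$ onto a subgraph isomorphic to $H(k,n,\{1,\dots,r\})$ (or, for the minimal case, onto a clique of size $n^{r}$). Here \emph{minimal} rank forces $k=r$, so the image of $f$ is a clique $K$ of size $n^{r}$; this is the maximal clique size in $H(d,n,\{1,\dots,r\})$, realised e.g. by fixing the last $d-r$ coordinates. Thus $f$ is an $n^{r}$-colouring of $H$ — a proper colouring of the complement? no, a homomorphism onto $K_{n^{r}}$ — whose colour classes are precisely the kernel classes of $f$, each of size $n^{d-r}$ by uniformity. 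Being a homomorphism onto $K_{n^{r}}$ means: any two vertices at Hamming distance in $\{1,\dots,r\}$ get distinct colours; equivalently, two vertices with the \emph{same} colour must agree in all coordinates or differ in at least $r+1$ of them.

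The heart of the argument is then to read off the Latin-hypercube condition from this. Fix any $r$ coordinate positions $i_1<\dots<i_r$ and any choice of the remaining $d-r$ coordinates; this determines an $r$-dimensional subarray (an $r$-subcube) of $[n]^{d}$, of size $n^{r}$, in which any two distinct vertices differ in between $1$ and $r$ coordinates and hence are adjacent in $H$. Therefore $f$ restricted to this subcube is injective, so it hits all $n^{r}$ colours exactly once. Conversely, an assignment of $n^{r}$ symbols to $[n]^{d}$ with the property that every such $r$-subarray is rainbow is exactly a $\LHC(d,n,r)$ by the definition given above, and one checks it is a graph homomorphism $H(d,n,\{1,\dots,r\})\to K_{n^{r}}$: if two vertices share a symbol they cannot lie in a common $r$-subarray, which means they cannot differ in only $1,\dots,r$ coordinates, i.e.\ they are non-adjacent. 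Composing with any fixed embedding $K_{n^{r}}\hookrightarrow H(d,n,\{1,\dots,r\})$ as a maximum clique yields a singular endomorphism of rank $n^{r}$, and two such endomorphisms coincide iff the underlying colourings, i.e.\ the Latin hypercubes, coincide (up to the harmless relabelling of symbols, which one fixes once and for all). This gives the claimed correspondence.

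The step I expect to be the main obstacle — or at least the one needing the most care — is pinning down that a minimal-rank endomorphism is \emph{forced} to have image a clique of size exactly $n^{r}$ and to be uniform, rather than something smaller or irregular; this is where Proposition's reliance on Lemma~6.4 / the structure theory of \cite{schaeferhamminggraph} is essential, and I would either quote it directly or reprove the needed special case: the image is a core-subgraph, the core of $H(d,n,\{1,\dots,r\})$ is $K_{n^{r}}$ since $\omega=\chi=n^{r}$ there, so minimal rank equals $n^{r}$; uniformity then follows from a counting/averaging argument over the vertex-transitive automorphism group together with the fact that each colour class, being an independent set in $H$ meeting every maximum $r$-subcube exactly once, has size exactly $n^{d}/n^{r}=n^{d-r}$. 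Everything else is bookkeeping about which coordinate tuples differ in how many places.
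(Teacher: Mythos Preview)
The paper does not supply its own proof of this proposition; it is simply quoted as Lemma~6.4 from \cite{schaeferhamminggraph}. Your proposal therefore cannot be compared against a proof in the present text, but it does fill the gap, and the main line is correct: an endomorphism of rank $n^{r}$ yields an $n^{r}$-colouring in which every $r$-subarray (a clique of size $n^{r}$) is rainbow, which is precisely the $\LHC(d,n,r)$ condition; conversely an $\LHC(d,n,r)$ is a homomorphism onto $K_{n^{r}}$, and composing with an embedding of $K_{n^{r}}$ as an $r$-subarray gives a rank-$n^{r}$ endomorphism.

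One point to tighten. In your ``reprove the special case'' alternative you assert $\omega=\chi=n^{r}$ in order to identify the core as $K_{n^{r}}$. But $\chi\bigl(H(d,n,\{1,\dots,r\})\bigr)=n^{r}$ is \emph{equivalent} to the existence of an $\LHC(d,n,r)$, which is exactly what the proposition is characterising; in particular it fails for the parameters in Table~\ref{tablecuboidsofclass2}. So that route is circular. The clean fix is already implicit in your sketch: from \cite{schaeferhamminggraph} the admissible ranks are $n^{k}$ with $r\le k\le d-1$, so the least conceivable rank is $n^{r}$; for an endomorphism $f$ of rank exactly $n^{r}$, the image has $n^{r}$ vertices and must contain the $f$-image of some $r$-subarray clique of size $n^{r}=\omega$, hence the image \emph{is} a clique. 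No appeal to $\chi$ is needed, and the correspondence then reads cleanly as ``rank-$n^{r}$ endomorphisms $\leftrightarrow$ $\LHC(d,n,r)$'', with both sides simultaneously empty for bad parameters. Also note that the correspondence is not literally a bijection unless you fix once and for all both the target clique and the labelling of its vertices by symbols; it is cleanest to phrase it as: the kernel partitions of rank-$n^{r}$ endomorphisms are exactly the symbol-class partitions of $\LHC(d,n,r)$'s.
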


It is well known that singular endomorphisms exist for all parameters, if $r=1$; however, Ethier et. al. \cite{ethier08} provided an existence condition for $r>1$ (which is going to be generalised in the next section) which shows that there are cases where no singular endomorphisms exist. But, for which parameters do singular endomorphism exist in the non-cubic case? Well, for the Hamming graph $H(d,n)$ a well-known existence condition is given by the product of clique and coclique number.

\begin{lemma}[Remark in \cite{pjc08}]\label{lemmaonindependentsets}
 If $X$ is a vertex-transitive graph on $n$ vertices with complete core ($\omega(X)=\chi(X)$), then $n=\omega(X)\cdot \alpha(X)$, where $\alpha(X)$ is the coclique number.
\end{lemma}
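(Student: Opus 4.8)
The plan is to sandwich $n$ between two inequalities --- one valid for every graph, one valid for every vertex-transitive graph --- and then invoke the hypothesis $\omega(X)=\chi(X)$ to collapse the sandwich to an equality. Throughout write $\omega=\omega(X)$, $\chi=\chi(X)$, $\alpha=\alpha(X)$.

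The easy half is the bound $n\le\chi\,\alpha$: any proper colouring with $\chi$ colours partitions $V(X)$ into $\chi$ independent sets, each of size at most $\alpha$, and this uses nothing beyond the definitions. The substantive half is the clique--coclique bound $\omega\,\alpha\le n$ for vertex-transitive $X$, which I would prove by an averaging argument over $\Gamma=\Aut(X)$, acting transitively on the $n$ vertices. Fix a maximum clique $C$ (so $|C|=\omega$) and a maximum independent set $I$ (so $|I|=\alpha$). For each $g\in\Gamma$ the image $gI$ is again independent, so a clique meets it in at most one vertex: $|C\cap gI|\le 1$. Summing over $\Gamma$ and swapping the order of summation, $\sum_{g\in\Gamma}|C\cap gI|=\sum_{c\in C}\sum_{i\in I}|\{g\in\Gamma:gi=c\}|=|C|\,|I|\cdot|\Gamma|/n$, since by transitivity the set of $g$ carrying a fixed $i$ to a fixed $c$ is a coset of a point stabiliser and hence has size $|\Gamma|/n$. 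The left-hand side is at most $|\Gamma|$, so $\omega\,\alpha\le n$. (Alternatively one may simply cite this as the standard clique--coclique bound, e.g.\ Godsil--Royle.)

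Combining the two halves with the hypothesis gives $n\le\chi\,\alpha=\omega\,\alpha\le n$, forcing equality throughout and yielding $n=\omega(X)\cdot\alpha(X)$. The only genuine content is the clique--coclique bound; everything else is bookkeeping. The main point to be careful about is the double-counting step --- in particular checking that all fibres of the orbit map $g\mapsto gi$ have equal size --- and one should note that connectedness of $X$ is not needed, since a disjoint union of isomorphic vertex-transitive graphs still satisfies both bounds.
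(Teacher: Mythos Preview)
Your argument is correct and is the standard route to this fact: the chromatic bound $n\le\chi\,\alpha$, the clique--coclique bound $\omega\,\alpha\le n$ for vertex-transitive graphs (via the averaging/double-counting over $\Aut(X)$ that you give), and then the hypothesis $\omega=\chi$ forcing equality. The paper itself does not supply a proof of this lemma at all; it is simply quoted as a remark from Cameron--Kazanidis~\cite{pjc08}, so there is nothing to compare against beyond noting that your proof is exactly the classical one underlying that remark.
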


Indeed, cliques and cocliques will play an important role for the singular endomorphisms of $H(n_1,...,n_d,S)$, too, as will be shown in Section \ref{section5}. However, Lemma 6.4 in \cite{schaeferhamminggraph} is saying more. It is, actually, saying that endomorphisms of minimal rank $n_1\cdots n_k$ exist, if and only if Latin hypercuboids of the corresponding type and class do. Thus, the next section is dedicated to the analysis of these Latin hypercuboids.

\section{Latin Hypercuboids of Class r}\label{section3}

In this section, we will deal with Latin hypercuboids and figure out their existence and their numbers for various parameters. First, we define these objects and describe their symmetry and their equivalence classes. Then, we discuss the existence of Latin hypercuboids of class $1$ and highlight the difference to Latin hypercuboids of class $k\geq 2$ by imposing a necessary condition on the parameters. Afterwards, we provide two constructions of Latin hypercuboids of class $r$ and a table counting small Latin hypercuboids of small parameters and small class. 

\subsection{Definition and Symmetry}

The definition given in Section \ref{section2} covered the Latin hypercubes of class $r$; here, we are extending this definition to Latin hypercuboids. These hypercuboids are generalising the Latin hypercubes of class $r$, in the same way as Latin rectangles are generalising Latin squares. 



\begin{definition}
 Let $n_1\geq n_2\geq \cdots \geq n_d\geq 2$ be integers. A \emph{Latin hypercuboid of dimension $d$, type $(n_1,...,n_d)$, class $r$ and order $n$} is an $\left[n_1\right]\times \left[n_2\right] \times \cdots \times \left[n_d\right]$ array based on $n$ distinct symbols, such that the symbols in every $r$-dimensional subarray occur at most once. However, if $n=\prod\limits_{i=1}^{r} n_i$, then in every $r$-dimensional subarray with $n$ cells each symbol occurs exactly once and in any other $r$-dimensional subarray each symbol occurs at most once. We write $\LHC(n_1,...,n_d,r)$ for a Latin hypercuboid of this order.
\end{definition}

Note, in the literature the terms Latin boxes or Latin parallelepipeds are used (for $r=1$). Also, it is obvious that our definition covers Latin hypercubes of class $r$, initially defined by Kishen \cite{kishen50}, as well as Latin boxes (of class $1$) which are found in various research articles (cf. \cite{evans60,cruse74,denley}). (We would like to mention that we have not found any previous publication mentioning Latin boxes of class $r$ or similar objects.)

\begin{remark}
\begin{enumerate}
 \item Each symbol in $LHC(n_1,...,n_d,r)$ appears the same number of times.
 \item If we do not mention the order of a Latin hypercuboid, then it should be obvious from the context (usually it is $\prod\limits_{i=1}^{r} n_i$).
 \item A \emph{partial Latin hypercuboid} is a Latin hypercuboid in the sense above where not every cell contains a symbol. In some cases this might mean that its order $n$ is strictly greater than $\prod\limits_{i=1}^{r} n_i$.
\end{enumerate}
\end{remark}


\begin{example}\label{example11}
The following is an example of a Latin hypercuboid of dimension $3$, type $(3,2,2)$ and class $2$. This cuboid has the top layer $L^{1}$ and bottom layer $L^{2}$.
 \[
           L^{1}=\begin{pmatrix}
          1&2&3\\ 4&5&6
         \end{pmatrix}, \quad 
         L^{2}=\begin{pmatrix}
          5&6&4\\2&3&1
         \end{pmatrix}\]
A partial Latin hypercuboid is, for instance, the following cube $M$
\[
           M^{1}=\begin{pmatrix}
          \ast&3\\ 5&6
         \end{pmatrix}, \quad 
         M^{2}=\begin{pmatrix}
          6&4\\3&1
         \end{pmatrix}, \]
         with empty cell denoted by $\ast$.
\end{example}

A Latin cuboid of class $r$ can be identified with a subset of an $\left[n_1\right]\times \cdots \times \left[n_d\right] \times \left[n_{d+1}\right]$ array $A$, where $n_{d+1}=n$; thus, symmetries of $A$ can be applied to the set of Latin hypercuboids.

The direct product $S_{n_1}\times \cdots \times S_{n_{d+1}}$ of symmetric groups is acting on $A$ via its natural action. The orbits under this action are the \emph{isotopy classes}\index{isotopy classes of Latin hypercuboids} of Latin hypercuboids of this type. In addition, if we are given a cube instead of a cuboid the symmetric group $S_{d+1}$ acts on the coordinates, as well. However, since the $n_i$ need not to be equal, we need to adjust and restrict this action to a subgroup, say, $\widetilde{S_{d+1}}$. The orbits under the action of 
\[\left( S_{n_1}\times \cdots \times S_{n_{d+1}} \right) \rtimes \widetilde{S_{d+1}},\]
are the \emph{paratopy classes}\index{paratopy classes}.

However, a weaker symmetry break leading to more equivalence classes is given by semi-reduced Latin hypercuboids. A Latin hypercuboid of dimension $d$, type $(n_1,...,n_d)$, class $r$ and order $n=\prod\limits_{i=1}^{r} n_i$ is semi-reduced, if the $n$ entries in the first $r$-subarray are naturally ordered like $1,2,...,n$. Every Latin hypercuboid of class $r$ is similar to a semi-reduced one.

\subsection{An Existence Condition for Latin hypercuboids}

The fundamental difference between Latin hypercuboids of class $1$ and Latin hypercuboids of class $r\geq 2$ is that a hypercuboid does not exist for every choice of parameters. The next example shows that Latin hypercuboids of class $1$ exist for any set of parameters $(n_1,n_2,..., n_d)$, whereas the subsequent lemma indicates that Latin hypercuboids of class $r$ do not exist for small parameters. 

\begin{example}\label{examplelatinhypercuboidofclass1}
 Let $\mathbb{Z_{n_i}}$ be the integers modulo $n_i$. A Latin hypercuboid of class $1$ is given by the function 
 \[\mathbb{Z}_{n_1}\times \mathbb{Z}_{n_2}\times \cdots \times\mathbb{Z}_{n_d}\rightarrow \mathbb{Z}_{n_1}, (a_1,...,a_d)\mapsto \sum_{i=0}^{d} a_i.\]
 To check that this truly is a Latin hypercuboid, we need to pick a coordinate $i$ and fix all others, that is all entries are equal except for position $i$. Then, the sums are equal if and only if the entries in position $i$ are equal.
\end{example}

Moving from class $1$ to class $r\geq 2$ the existence is not guaranteed any more. More on this is given by the following lemma; its proof is a generalisation of the corresponding result on Latin hypercubes of class $r$ \cite[Lemma 6.1.1]{ethier08}.

%

\begin{lemma}
 Let $\LHC(n_1,...,n_d,r)$ be a Latin hypercuboid of class $r\geq 2$. Then, its parameters satisfy
 \begin{equation}\label{boundexistenceclass2}
  \sum\limits_{i=1}^{d} n_i-\prod\limits_{i=1}^{r}n_i \leq d-1.
 \end{equation}
\end{lemma}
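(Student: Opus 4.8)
The plan is to rewrite \eqref{boundexistenceclass2} in a more transparent form and then reduce it to counting the symbols on a single ``cross'' of cells. Since $\sum_{i=1}^d n_i - d = \sum_{i=1}^d (n_i-1)$, the asserted bound is equivalent to
\[
 \sum_{i=1}^d (n_i - 1) \leq n - 1, \qquad n := \prod_{i=1}^r n_i ,
\]
where $n$ is exactly the number of symbols of the hypercuboid (a genuine $\LHC(n_1,\dots,n_d,r)$ has order $\prod_{i=1}^r n_i$ by definition of the notation). So it will be enough to exhibit $1 + \sum_{i=1}^d (n_i-1)$ cells that carry pairwise distinct symbols.

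To that end I would fix the cell $c_0 = (1,\dots,1)$ and let $U$ be the set of cells differing from $c_0$ in at most one coordinate; for each direction $i$ there are $n_i-1$ cells of $U$ differing from $c_0$ only in coordinate $i$, and together with $c_0$ these are all distinct, so $|U| = 1 + \sum_{i=1}^d (n_i-1)$. The key claim is that all cells of $U$ have distinct symbols. Given distinct $c,c' \in U$, the set $J$ of coordinates in which $c$ or $c'$ differs from $c_0$ satisfies $|J| \leq 2$; since $r \geq 2$ (and $r \leq d$) one can enlarge $J$ to an $r$-set $T$ and look at the $r$-dimensional subarray with free directions $T$ and all remaining coordinates equal to $1$. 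This subarray contains $c_0$, $c$ and $c'$, and by the defining property of a Latin hypercuboid of class $r$ no symbol repeats inside an $r$-dimensional subarray, so $c$ and $c'$ receive different symbols. Hence $|U| \leq n$, which is the reformulated inequality.

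This is the cuboidal counterpart of \cite[Lemma 6.1.1]{ethier08}, and I do not expect a genuine obstacle: the argument is a short pigeonhole once the bound is rewritten. The one point that must be handled correctly — and the natural place to slip — is that the ``no repeated symbol'' condition is available for \emph{every} $r$-dimensional subarray, not only for the full ones with $\prod_{i=1}^r n_i$ cells; the definition grants exactly this. It is also worth noting that the hypothesis $r \geq 2$ enters only in the step where two coordinate changes are packed into a single $r$-subarray, which is why the statement genuinely fails for $r = 1$, consistent with Example~\ref{examplelatinhypercuboidofclass1}.
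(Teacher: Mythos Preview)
Your argument is correct and is essentially the same as the paper's: you fix the cell $(1,\dots,1)$, look at the ``cross'' $U$ of cells differing from it in at most one coordinate, and use $r\geq 2$ to conclude that all symbols on $U$ are distinct, yielding $1+\sum_i(n_i-1)\leq \prod_{i=1}^{r}n_i$. The paper phrases this more tersely in terms of the origin and coordinate axes, but the content is identical; your version simply makes explicit the step of embedding any two cells of $U$ in a common $r$-subarray.
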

\begin{proof}
 Consider the origin of the coordinate system and let $0$ be the common point of all the coordinate axes. Since the cuboid is of class $r\geq 2$, any pair of coordinate axes is not allowed to have another common point. In sum, we have $(n_1-1)+(n_2-1)+\cdots + (n_d-1)$ distinct symbols on the coordinate axes. However, there is a total of $n_1n_2\cdots n_r$ distinct symbols used for the hypercuboid. Hence, 
 \[1+\sum\limits_{i=1}^{d} (n_i-1) \leq \prod\limits_{i=1}^{r} n_i.\]
\end{proof}

\begin{table}[t!]
 \begin{center}
  \begin{tabular}{|c|c|c|}\hline
   $d=4$ & $d=5$& $d=6$ \\\hline
      
$[2,2,2,2]$ & $[2,2,2,2,2]$ & $[2,2,2,2,2,2]$\\
 & $[3,2,2,2,2]$ & $[3,2,2,2,2,2]$\\
 & $[3,3,3,3,2]$ & $[3,3,3,2,2,2]$\\
 & $[3,3,3,3,3]$ & $[3,3,3,3,2,2]$\\
 &  & $[3,3,3,3,3,2]$\\
 &  & $[3,3,3,3,3,3]$\\
 &  & $[4,2,2,2,2,2]$\\
 &  & $[4,3,3,3,3,2]$\\
 &  & $[4,4,4,4,4,2]$\\
 &  & $[4,3,3,3,3,3]$\\
 &  & $[4,4,4,4,3,3]$\\
 &  & $[4,4,4,4,4,3]$\\
 &  & $[4,4,4,4,4,4]$\\\hline
  \end{tabular}
 \end{center}
\caption{Parameters with $n_1\leq 5$ and $r=2$ not satisfying bound (\ref{boundexistenceclass2}).}\label{tablecuboidsofclass2}
\end{table}

Table \ref{tablecuboidsofclass2} contains all the parameters $n_1,...,n_d$, for $n_1\leq 5$ and $d\leq 6$, not satisfying the inequality. This, provides a non-existence argument for these parameters. 


\begin{corollary}
 \begin{enumerate}
  \item The parameters of $\LHC(d,n,r)$ need to satisfy $ d \leq \frac{n^{r}-1}{n-1}$.
  \item In particular, for $r=2$ it must hold $d\leq n+1$.
 \end{enumerate}
\end{corollary}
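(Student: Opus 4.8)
The plan is to obtain both parts as an immediate specialisation of the preceding Lemma to the cubic case $n_1 = n_2 = \cdots = n_d = n$, in which $\LHC(n,\dots,n,r)$ is precisely $\LHC(d,n,r)$. As in the Lemma, we work under the standing hypothesis $r \geq 2$; for $r=1$ a Latin hypercuboid of every type exists by the construction in Example~\ref{examplelatinhypercuboidofclass1}, so the asserted bound (which would read $d \leq 1$) is not claimed there.

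For part (1), substituting $n_i = n$ for all $i$ into inequality~(\ref{boundexistenceclass2}) gives $dn - n^r \leq d-1$, hence $d(n-1) \leq n^r - 1$. Since $n \geq 2$ we divide by $n-1 > 0$ to conclude $d \leq \frac{n^r-1}{n-1}$. Note that the right-hand side equals $1 + n + n^2 + \cdots + n^{r-1}$, so it is automatically an integer and no rounding is needed. For part (2), setting $r=2$ and using the factorisation $n^2 - 1 = (n-1)(n+1)$ yields $\frac{n^2-1}{n-1} = n+1$, and therefore $d \leq n+1$.

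There is no genuine obstacle in this argument: it is a one-line rearrangement of the bound already proved. The only point deserving a word of care is the inherited restriction $r \geq 2$ (the whole argument rests on the fact that the coordinate axes through the origin must carry pairwise-distinct symbols, which fails to be a constraint when $r=1$); this has been flagged above so that the statement is not misread as applying to the class-$1$ case.
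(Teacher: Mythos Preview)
Your proof is correct and is exactly the intended derivation: the paper states the corollary without proof, as an immediate specialisation of the preceding lemma to the cubic case $n_1=\cdots=n_d=n$, which is precisely what you carry out. Your remark on the implicit hypothesis $r\geq 2$ is accurate and worth keeping.
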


 Note, the bound for $\LHC(d,n,r)$ is not tight, in general. For instance, Ethier has shown that the parameters actually need to satisfy $d\leq (n-1)^{r-1}+r$ (cf. \cite[Thm. 6.1.2]{ethier08}), but our simple generalisation is good enough for counting purposes.

\subsection{Constructing Latin Hypercuboids}

\subsubsection{Construction 1: An Elementary Construction}

Next, we turn to two constructions which provide Latin hypercuboids of class $r$. The first one is well known for the cubic case case where $r=1$ and $d=3$ \cite{mckay08,kuhl11} and for $r=2$ and some $d$ \cite{saxena60};  however, we show this construction can be generalised to higher classes, too, (including some types of Latin hypercuboids). Afterwards, this construction is demonstrated on an example.

We present our construction in two steps. In the first step we create a Latin hypercube of class $r$ from a Latin square; then in the second step we construct a Latin hypercube of dimension $d+1$ from a Latin hypercube of dimension $d$.


\begin{lemma}\label{lemmaconstruction1}
Let $L$ be a $d$ dimensional $\left[n\right]\times \left[n\right]\times \cdots \times \left[n\right]$ array whose entries are the $d$-tuples over the set $\{1,...,n\}$ and $S$ an $n\times n$ Latin square. Then, we can construct a Latin hypercube $\LHC(d+1,n,r)$. 
\end{lemma}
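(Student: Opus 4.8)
The plan is to pass to the language of functions and then write the array down explicitly. A Latin hypercube $\LHC(m,n,r)$ of order $n^{r}$ is the same datum as a map $f\colon [n]^{m}\to [n]^{r}$ with the property that for every $r$-element set $I\subseteq\{1,\dots,m\}$ and every fixing of the coordinates outside $I$, letting the coordinates in $I$ run over $[n]$ gives a bijection onto $[n]^{r}$; and once all $r$-subarrays behave this way, every smaller subarray automatically carries each symbol at most once, so the $r$-subarrays are the only thing to check. In these terms the hypotheses supply the array $L$, i.e. the map $\ell\colon [n]^{d}\to[n]^{d}$ recording the coordinates, together with a Latin square $S\colon[n]^{2}\to[n]$ each of whose rows and columns is a permutation of $[n]$.

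Next I would write down the candidate $(d+1)$-dimensional array. Index its cells by $(a_{1},\dots,a_{d},a_{d+1})$, read off $(\ell_{1},\dots,\ell_{d})=\ell(a_{1},\dots,a_{d})$, and place in that cell the symbol
\[
  g(a_{1},\dots,a_{d+1})=\bigl(\,S(\ell_{1},a_{d+1}),\ S(\ell_{2},a_{d+1}),\ \dots,\ S(\ell_{r},a_{d+1})\,\bigr)\in[n]^{r};
\]
in the basic instance, where $L$ just records $(a_{1},\dots,a_{d})$, this reads $\bigl(S(a_{1},a_{d+1}),\dots,S(a_{r},a_{d+1})\bigr)$. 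What is left is to verify that $g$ is a Latin hypercube of class $r$, i.e. that every $r$-subarray of this $[n]^{d+1}$-array meets each of the $n^{r}$ symbols exactly once.

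I would carry this out by a dichotomy on whether the new direction, coordinate $d+1$, is among the $r$ free coordinates. If it is \emph{not}, then $a_{d+1}$ is frozen at some value $c$, the free coordinates form an $r$-subarray of $L$ on which $(\ell_{1},\dots,\ell_{r})$ is already a bijection onto $[n]^{r}$, and composing coordinatewise with the permutations $x\mapsto S(x,c)$ preserves bijectivity. If $d+1$ \emph{is} free, only $r-1$ coordinates of $L$ are free; here I would invert $g$ by hand — a suitable coordinate of a prescribed target value, together with the fact that the rows and columns of $S$ are permutations, recovers first the value taken by the frozen block of $\ell$, then $a_{d+1}$, then the remaining free coordinates — so again $g$ restricts to a bijection.

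The step I expect to be the genuine obstacle is exactly this second case. Being a Latin hypercube is not by itself enough for $L$ to make the ``new-direction'' subarrays behave: a $2\times2\times2$ example shows that the coordinatewise recipe above can already fail for a badly chosen $L$. One therefore has to use the precise structure of $L$ (in the two-step reading of the construction, the hypercube produced from $S$ in the first step), and, if the lemma is to be iterated up to all admissible dimensions, check that the output $g$ again carries that structure. Identifying the right invariant, and keeping track of which Latin squares — and orthogonality relations among them — are needed to maintain it, is where the work concentrates; the two bijectivity computations above are then routine.
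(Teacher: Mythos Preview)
Your construction coincides with the paper's: each row of the Latin square $S$ gives a permutation $\phi_i\in S_n$, and layer $i$ of the new hypercube is obtained by applying $\phi_i$ coordinatewise to the entries of $L$. The paper's proof is then simply ``it can easily be checked that this construction works''; your two-case bijectivity verification is that check carried out in detail.

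You are, however, misreading the role of $r$. In this lemma $L$ is the trivial array whose entry at position $(a_1,\dots,a_d)$ is the tuple $(a_1,\dots,a_d)$ itself, so the symbols are $d$-tuples and the output is an $\LHC(d+1,n,d)$; the corollary that immediately follows (``there always exists an $\LHC(r+1,n,r)$'') confirms that the intended reading is $r=d$. With $r=d$ your Case~1 has the single subset $I=\{1,\dots,d\}$ and your Case~2 inversion goes through exactly as you wrote, so neither case presents any obstacle.

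The difficulty you anticipate---that a ``badly chosen $L$'' can make the new-direction subarrays fail, and that one must track an extra structural invariant through an iteration---is real but belongs to the \emph{next} lemma in the paper, where $L$ is a general $\LHC(d,n,r)$ rather than the trivial array and where an explicit compatibility condition between the $(r+1)$-layers of $L$ and the permutations $\phi_i$ is imposed. For the present lemma there is nothing to track. (Incidentally, if one does read $r<d$ literally and projects onto the first $r$ coordinates, the construction already fails in your Case~1, not only in Case~2: once $a_{d+1}$ is frozen there are $r$-subsets $I\subseteq\{1,\dots,d\}$ other than $\{1,\dots,r\}$, and on those $g$ ignores some of the free coordinates.)
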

\begin{proof}
 Each row of $S$ corresponds to a permutation $\phi_i$ in the symmetric group $ S_{n}$, for $i=1,...,n$. To obtain the $i$th layer in the new Latin hypercuboid we apply $\phi_i$ to the entries of $L$ via $(x_1,...,x_d){\phi_i}=(x_1{\phi_i},...,x_d{\phi_i})$. It can easily be checked that this construction works.
\end{proof}

\begin{corollary}
 For $n\geq 2$ and $r\geq 1$, there always exists a Latin hypercube \linebreak[4]$\LHC(r+1,n,r)$.
\end{corollary}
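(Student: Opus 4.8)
The plan is to read off $\LHC(r+1,n,r)$ directly as the output of Construction~1, taken in its lowest nontrivial instance. Concretely, in Lemma~\ref{lemmaconstruction1} I would set the input dimension to be $d=r$ and let $L$ be the $r$-dimensional $[n]\times\cdots\times[n]$ array whose cell $(x_1,\dots,x_r)$ simply contains the tuple $(x_1,\dots,x_r)$ itself. This $L$ meets the hypothesis of the lemma verbatim: its entries are exactly the $n^r$ distinct $r$-tuples over $\{1,\dots,n\}$; equivalently, $L$ is already a $\LHC(r,n,r)$, since the only $r$-dimensional subarray of an $r$-dimensional array is that array itself, so the class-$r$ condition is automatic. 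Since $n\geq 2$ an $n\times n$ Latin square $S$ exists, for instance the Cayley table $S(i,j)=i+j\pmod n$ of $\Z_n$. Feeding $L$ and $S$ into Lemma~\ref{lemmaconstruction1} then produces a Latin hypercube $\LHC(r+1,n,r)$, which is exactly the claim.

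To be safe I would also spell out the resulting array and re-verify the defining property in this base case, since it is the place where the "it can easily be checked'' of Lemma~\ref{lemmaconstruction1} has to be pinned down. Writing $\phi_1,\dots,\phi_n$ for the permutations given by the rows of $S$, the construction yields the $(r+1)$-dimensional array
\[
A(x_1,\dots,x_r,i)=(x_1\phi_i,\dots,x_r\phi_i),\qquad (x_1,\dots,x_r)\in[n]^r,\ i\in[n],
\]
on the symbol set $[n]^r$. There are two kinds of $r$-dimensional subarrays. Fixing the last coordinate at a value $i$ gives the subarray $\{(x_1\phi_i,\dots,x_r\phi_i):(x_1,\dots,x_r)\in[n]^r\}$, which is all of $[n]^r$ because $\phi_i$ is a permutation, so every symbol occurs exactly once. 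Fixing instead some coordinate $k\leq r$ at a value $c$ leaves the $r$ free coordinates $x_1,\dots,\widehat{x_k},\dots,x_r,i$; given a target symbol $(s_1,\dots,s_r)$, the equation $c\phi_i=s_k$ has a unique solution $i$ (this is precisely the statement that column $c$ of $S$ is a permutation), and then $x_j=s_j\phi_i^{-1}$ is forced for $j\neq k$, so again each symbol occurs exactly once. Hence $A$ is a $\LHC(r+1,n,r)$, and each symbol appears $n=n^{(r+1)-r}$ times as required.

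I do not expect a genuine obstacle here: the substance is entirely contained in Lemma~\ref{lemmaconstruction1}, and the only thing to watch is that the instance $d=r$ is legitimate, i.e. that the ``all tuples'' array is an admissible input and that the output order is $\prod_{i=1}^{r} n_i = n^r$, both of which are immediate. If one preferred not to cite the lemma, the displayed array $A$ together with the two-case check above is a self-contained proof, and it makes transparent that the single ingredient really needed is the column-Latin-square (equivalently, sharply-transitive-set) property of $S$.
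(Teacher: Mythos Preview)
Your proposal is correct and is precisely the intended reading: the corollary in the paper is stated immediately after Lemma~\ref{lemmaconstruction1} with no separate proof, so the argument is exactly to take $d=r$ in that lemma with $L$ the tautological $r$-tuple array and any $n\times n$ Latin square $S$. Your explicit two-case verification of the class-$r$ property for the resulting array $A$ actually goes beyond what the paper does (it leaves this as ``it can easily be checked''), so your write-up is, if anything, more complete.
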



The previous construction is straightforward to extend. Assume $S$ is a Latin hypercube $\LHC(r+1,n,r)$ whose $r$-layers correspond to permutations $\phi_i \in S_N$, where $N=n^{r}$ and $i=1,...,n$. If $L$ is a Latin hypercube $\LHC(d,n,r)$, then $L{\phi_i}$ is the $i$th layer of a Latin hypercube $\LHC(d+1,n,r)$ provided the following condition is satisfied:
\begin{equation}\label{condition1}
 \text{No }(r+1) \text{-layer in } L \text{ is constructed by applying the permutations } \phi_i \text{ to an } r \text{-layer.}
\end{equation}
Note that this condition implicitly identifies the $N$ symbols as $r$-tuples over $\{1,...,n\}$, and thus the $\phi_i$ correspond to permutations in $S_n$.

\begin{lemma}\label{lemmaconstruction2}
Let $L$ be a Latin hypercube $\LHC(d,n,r)$ and $S$ a Latin hypercube \linebreak[4]$\LHC(r+1,n,r)$. Then, we can embed $L$ into an Latin hypercube $\LHC(d+1,n,r)$, provided condition \ref{condition1} is satisfied.
\end{lemma}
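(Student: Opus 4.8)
The plan is to make rigorous the informal description preceding the statement, in which each $r$-layer of $S$ is read as a permutation of the $N=n^r$ symbols (identified with $r$-tuples over $\{1,\dots,n\}$), and the new hypercube is built by stacking the images $L\phi_i$ along a new $(d{+}1)$-st coordinate direction. First I would set up notation carefully: write the cells of $L$ as $L(x_1,\dots,x_d)\in\{1,\dots,N\}$ where we fix once and for all a bijection between $\{1,\dots,N\}$ and $\{1,\dots,n\}^r$, so that each $\phi_i$ may be regarded as acting on symbols. Define the candidate array $M$ on $[n]^{d+1}$ by $M(x_1,\dots,x_d,i)=L(x_1,\dots,x_d)\phi_i$. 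I then need to check the defining property of $\LHC(d+1,n,r)$: every $r$-dimensional subarray contains $n^r$ cells and every symbol occurs exactly once in it.

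The second step is the case analysis on which $r$ of the $d{+}1$ coordinate directions are free in a given $r$-subarray. If the new coordinate $d{+}1$ is \emph{not} among the free directions, then $i$ is fixed, the subarray is $L\phi_i$ restricted to an $r$-subarray of $L$, and since $\phi_i$ is a bijection and $L$ is an $\LHC(d,n,r)$, this subarray is a permuted copy of a full $r$-subarray of $L$, hence contains every symbol once. If the new coordinate \emph{is} among the free directions, then exactly $r-1$ of the old coordinates $x_{j_1},\dots,x_{j_{r-1}}$ are free and the remaining old coordinates are fixed; varying $i$ over $\{1,\dots,n\}$ and the $x_{j_\ell}$ over $\{1,\dots,n\}$, we must see each of the $n^r$ symbols exactly once. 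Fixing the values of the free old coordinates gives an $(r{-}1)$-subarray of $L$ that, after freezing everything else, collapses to a single cell with value $v=L(\dots)\in\{1,\dots,N\}$; varying the $x_{j_\ell}$ instead produces (because $L$ is class $r$, so any $(r{-}1)$-subarray has distinct entries — these come from an $r$-subarray) a set of $n^{r-1}$ distinct symbols $v_1,\dots,v_{n^{r-1}}$. Along the new direction each such $v_k$ is spread out as $v_k\phi_1,\dots,v_k\phi_n$. So the $r$-subarray realises the multiset $\{\,v_k\phi_i : 1\le k\le n^{r-1},\ 1\le i\le n\,\}$, and we need this to be all of $\{1,\dots,N\}$ without repetition. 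This is exactly where condition~(\ref{condition1}) enters: the $v_k$, read as $r$-tuples, together with the action of the $\phi_i$ on their last coordinate (equivalently, the way an $(r{+}1)$-layer of $L$ sits over an $r$-layer) must realise all of $\{1,\dots,n\}^r$. I would phrase condition~(\ref{condition1}) precisely as the statement that for every $r$-layer direction of $L$ and every choice of the corresponding fixed coordinates, the resulting $(r{-}1)$-subarray values $\{v_k\}$ form a transversal with respect to the permutation family $\{\phi_i\}$, i.e.\ $\{v_k\phi_i\}$ is a partition of $\{1,\dots,N\}$; granting this, the count is $n^{r-1}\cdot n=n^r=N$ with no collisions, as required.

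The main obstacle I anticipate is bookkeeping rather than ideas: condition~(\ref{condition1}) is stated somewhat loosely in the text (``no $(r{+}1)$-layer in $L$ is constructed by applying the permutations $\phi_i$ to an $r$-layer''), so the real work is to pin down the exact combinatorial meaning that makes the second case go through, and to verify that $S$ being an $\LHC(r+1,n,r)$ guarantees the $\phi_i$ are genuinely permutations of $\{1,\dots,N\}$ (each $r$-layer of $S$ uses all $N$ symbols exactly once, which is precisely this) and that moreover the vertical condition interacts correctly with the chosen identification $\{1,\dots,N\}\cong\{1,\dots,n\}^r$. I would close by remarking that Lemma~\ref{lemmaconstruction1} is the base case $d=r+1$ of this construction (any $\LHC(r+1,n,r)$ trivially embeds in itself, or rather the earlier lemma produces one from a Latin square), so iterating Lemma~\ref{lemmaconstruction2} yields $\LHC(d,n,r)$ for all $d\ge r+1$ whenever a suitable $S$ and a chain of condition-(\ref{condition1})-compatible extensions exist.
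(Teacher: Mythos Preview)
Your construction is exactly the one the paper uses: set $M(x_1,\dots,x_d,i)=L(x_1,\dots,x_d)\phi_i$, where the $\phi_i\in S_N$ are read off the $r$-layers of $S$; the paper's own proof stops there and does not carry out the case analysis you propose. Your verification (splitting on whether the new coordinate is among the $r$ free ones, and invoking condition~(\ref{condition1}) in the second case) is the natural way to complete the argument and is more detailed than what the paper provides, so the approaches agree.
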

\begin{proof}
Consider the layers of $S$ as permutations $\phi_i\in S_{N}$, where $N=n^{r}$ and $i=1,...,r+1$. Then, the $i$th layer of the new Latin hypercuboid is $L{\phi_i}$ (where $\phi_1$ is the identity).
\end{proof}

As can be seen from the next example this construction can be modified to create Latin hypercuboids, too.


\begin{example}
 \[L=\begin{pmatrix}
          (1,1)&(1,2)&(1,3)\\
          (2,1)&(2,2)&(2,3)\\
         \end{pmatrix}, \quad 
         S=\begin{pmatrix}
          1&2&3\\
          2&3&1
         \end{pmatrix}\]
The rows of $S$ provide us with permutations $\phi_1=1$ and $\phi_2=(1,2,3)$; moreover, we set $\psi=(1,2)$. Then, applying $\psi$ to the first coordinate of each entry and $\phi_i$ to the second gives us the entries of layer $i$. Thus we obtain the Latin cuboid $L^{\ast}$ given by the two layers
\[L^{\ast,1}=\begin{pmatrix}
          (1,1)&(1,2)&(1,3)\\
          (2,1)&(2,2)&(2,3)\\
         \end{pmatrix},L^{\ast,2}=\begin{pmatrix}
          (2,2)&(2,3)&(2,1)\\
          (1,2)&(1,3)&(1,1)\\
         \end{pmatrix}, \]
This cuboid is essentially the same as in Example \ref{example11}
\end{example}

Certainly, before applying this construction we would need to check whether the existence condition in (\ref{boundexistenceclass2}) or Ethier's bound $d\leq (n-1)^{r-1}+r$ is satisfied by the parameters. If so, then repeatedly applying Lemma \ref{lemmaconstruction2} leads to a finite chain of embeddings. In this regard, it would be interesting to know whether or not Ethier's bound provides a sufficient condition on the existence of Latin hypercubes of class $r$. In other words, is his bound strict? Unfortunately, this question is out of the scope of this research.

\subsubsection{Construction 2: Extending Quasigroups}

The second construction makes use of the notion of quasigroups. A \emph{quasigroup} is a set $Q$ with binary operation which admits the Latin square property, i.e., for all $a,b\in Q$ there exist unique elements $x,y\in Q$ such that the following equations hold
\begin{align*}
 ax&=b,\\
 ya&=b.
\end{align*}
Through the Latin square property quasigroups are equivalent to Latin squares (for more on quasigroups the reader is pointed to \cite{smith07}).

However, a natural generalisation form $d$-ary quasigroups. Such groups correspond to Latin hypercubes of dimension $d$ and class $1$. A \emph{$d$-ary quasigroup} is a map $f:Q^{d}\rightarrow Q$ such that the equation $f(x_1,...,x_d)=x_{d+1}$ can be uniquely solved for one of the variables, if the remaining $d$ variables are known. In this sense, a quasigroup from above is a binary ($2$-ary) quasigroup. Furthermore, an additional modification allows us to construct Latin hypercubes of dimension $d$ class $r$.

\begin{definition}
We call a map $f:Q^{d}\rightarrow Q^{r}$ a \emph{$d$-ary quasigroup of class $r$}, if the equation
\[f(x_1,...,x_d)=(x_{d+1},...,x_{d+r})\]
can be uniquely solved for any $r$ variables, if the remaining $d$ variables are known.
\end{definition}

A $d$-ary quasigroup of class $r$ can be interpreted as a Latin hypercube of dimension $d$ and class $r$ by considering the first $d$ coordinates $(x_1,...,x_d)$ as positions and the last $r$ coordinates as the entries. 

Obviously, such a map reminds us of linear maps and matrices. Hence, we provide the following construction. Let $Q$ be a finite field and $f$ a $d\times r$ matrix over $Q$. Moreover, let $e_1,...,e_d$ be the standard basis of the vector space $Q^{d}$ and define a $k$-dimensional layer $L$ (a $k$-layer) to be a subspace spanned by any choice of $k$ of the vectors $e_1,...,e_d$. 

\begin{proposition}
 A $d\times r$ matrix $f$ is a $d$-ary quasigroup of class $r$ (and thus a Latin hypercube of dimension $d$ and class $r$), if $f$ is injective on any $k$-layer.
\end{proposition}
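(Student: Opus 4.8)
The plan is to argue entirely in the array picture determined by $f$. Regarding elements of $Q^{d}$ as row vectors, let $A$ be the $d$-dimensional array over $Q^{d}$ whose symbol at a position $a$ is $af\in Q^{r}$; the symbol alphabet is $Q^{r}$, of size $|Q|^{r}$. First I would set up the dictionary between the hypothesis and linear algebra: for a $k$-layer $L=\langle e_{i_1},\dots,e_{i_k}\rangle$ the restriction $f|_{L}$ is injective exactly when the rows $f_{i_1},\dots,f_{i_k}$ of $f$ are linearly independent in $Q^{r}$; since subsets of independent families stay independent, ``$f$ injective on every $k$-layer'' (over all admissible $k$) amounts to ``any $\min(d,r)$ rows of $f$ are independent''.

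The Latin hypercube property is then the easy half. An $r$-subarray is a coset $a+L$ of an $r$-layer $L$, hence has exactly $|Q|^{r}$ cells; if two of its positions $a,a'$ carried the same symbol we would have $(a-a')f=0$ with $a-a'\in L$, so injectivity of $f|_{L}$ forces $a=a'$. Thus the $|Q|^{r}$ symbols occurring in the subarray are pairwise distinct, and since the alphabet has size $|Q|^{r}$ each occurs exactly once; a fibre count (one $r$-subarray is already a bijection onto $Q^{r}$ and $f$ is linear, so every fibre of $f$ has $|Q|^{d-r}$ points) shows each symbol occurs $|Q|^{d-r}$ times in all. Hence $A$ is an $\LHC(d,n,r)$ with $n=|Q|$.

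For the quasigroup property I would split into cases according to which of the $d+r$ variables in $f(x_{1},\dots,x_{d})=(x_{d+1},\dots,x_{d+r})$ are designated as unknowns. Write the unknown position indices as $B\subseteq\{1,\dots,d\}$ and the unknown symbol indices as $\{d+j:j\in A\}$ with $A\subseteq\{1,\dots,r\}$ and $|A|+|B|=r$. After substituting the known values, the equations indexed by $\{1,\dots,r\}\setminus A$ become a square linear system of size $|B|$ in the unknown position variables whose coefficient matrix is the $|B|\times|B|$ block of $f$ on rows $B$ and columns $\{1,\dots,r\}\setminus A$; it is uniquely solvable for every right-hand side iff that block is nonsingular, and once it is solved the remaining equations (those with $j\in A$) determine the unknown symbol variables. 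So the whole statement reduces to the assertion that every square submatrix of $f$ is nonsingular, which I would extract from the hypothesis by matching each such minor to the layer (and, when some position coordinates are fixed, to the induced restriction) that certifies its nonsingularity.

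The hard part will be precisely that last matching step in the mixed case $0<|A|<r$: knowing that the rows indexed by $B$ are independent only says that \emph{some} $|B|$ columns give a nonsingular block, whereas the solve-instance singles out a \emph{specific} set of columns. The plan to handle this is to pass through the equivalent form of the hypothesis --- $[\,I_{d}\mid f\,]$ generates an MDS code, equivalently every square submatrix of $f$ is nonsingular --- which then covers all solve-instances at once; setting up that equivalence is the one genuinely non-routine point, the rest being elementary linear algebra and counting.
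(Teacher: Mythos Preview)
The paper supplies no proof for this proposition; it is stated and followed directly by an example. Your array argument for the Latin-hypercube conclusion is correct and is really the operative content: injectivity of the linear map $f$ on an $r$-layer forces distinct symbols on each of its cosets, so every $r$-subarray carries each of the $|Q|^{r}$ symbols exactly once.

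Your plan for the full quasigroup claim, however, has a genuine gap. You correctly reduce the mixed solve-instances ($0<|A|<r$) to nonsingularity of a \emph{specified} $|B|\times|B|$ minor of $f$, and hence to the condition that every square submatrix of $f$ be invertible (the MDS property of $[\,I_d\mid f\,]$). But this is strictly stronger than the stated hypothesis: injectivity on every $k$-layer only says that every set of at most $r$ \emph{rows} of $f$ is linearly independent, which controls the full $r\times r$ minors but not smaller ones. The paper's own example $f=\left(\begin{smallmatrix}1&0\\0&1\\1&1\\1&2\end{smallmatrix}\right)$ over $GF(3)$ is injective on every $2$-layer, yet $f_{1,2}=0$ is a singular $1\times1$ minor; taking $x_{1},x_{5}$ as the unknowns and $x_{2},x_{3},x_{4},x_{6}$ as given, the second coordinate equation $x_{2}+x_{3}+2x_{4}=x_{6}$ becomes a bare consistency condition while the first leaves a single linear relation between the two unknowns $x_{1},x_{5}$, so unique solvability fails. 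The ``equivalence'' you intend to invoke therefore does not hold, and the quasigroup half cannot be closed from the hypothesis as written; what is actually provable---and what your argument does establish cleanly---is precisely the Latin-hypercube statement.
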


\begin{example}
 Let $Q$ be the field $GF(3)$ and $f$ be the matrix
 \[
 \begin{pmatrix}
  1&0\\
  0&1\\
  1&1\\
  1&2
 \end{pmatrix}.
 \]
 Then, the $2$-layer $L$ spanned by $e_1$ and $e_4$ is mapped to $Lf$ (action on the right) which is the $2$-dimensional subspace spanned by the first and fourth row of $f$.
\end{example}

In a similar way it is possible to construct Latin hypercuboids instead of hypercubes. For instance, in Example \ref{examplelatinhypercuboidofclass1} we can identify the map with the $1\times d$ matrix consisting of $1$'s.

\subsection{Counting Latin Hypercuboids}

Latin squares have been counted for many decades, and so do Latin rectangles. More recently, McKay and Wanless \cite{mckay08} provided the numbers of Latin hypercubes of class $1$ for small dimensions. However, after a thorough research, we were not able to find any counting of Latin hypercuboids of class $r$ and not even the numbers of $3$-dimensional Latin cuboids of class $1$. Thus, in Table \ref{tablecountingLatinhypercuboids} we provide the numbers of Latin hypercuboids of dimension $d$, type $(n_1,...,n_d)$ and class $r$. 

The numbers appearing in Table \ref{tablecountingLatinhypercuboids} have been generated using the constraint satisfaction program MINION developed at the University of St.~Andrews. Each number represents the number of semi-reduced hypercuboids, and we provide the formula for the whole number of hypercuboids in a moment. But before, using Inequality \ref{boundexistenceclass2}, we were able to eliminate many small parameters indicated by $0^{1)}$. The 'minus' entries indicate the case $r\geq d$, where no hypercuboids can exists. Finally, a question mark indicates that we were not able to determine this number. 

\begin{table}

\begin{center}
{
\begin{tabular}{|c|l|c|c|c|}\hline
 & & \multicolumn{3}{c|}{Class}\\
Dimension & \multicolumn{1}{c|}{Type} & $r=1$ & $r=2$ & $r=3$ \\\hline\hline
3 & (2,2,2) & 1 & 1 & - \\
3 & (3,2,2) & 6 & 4 & - \\
3 & (3,3,2) & 4 & 448 & - \\
3 & (3,3,3) & 4 & 40 & - \\
3 & (4,2,2) & 321 & 81 & - \\
3 & (4,3,2) & 1,128 & 190,992 & - \\
3 & (4,4,2) & 792 & 3,089,972,673 & - \\
3 & (4,3,3) & 5,664 & 1,219,584 & - \\
3 & (4,4,3) & 2,304 & ? & - \\
3 & (4,4,4) & 2,304 & 10,123,306,543 & - \\
3 & (5,2,2) & 33,372 & 1,936 & - \\
3 & (5,3,2) & 2,118,624 & ? & - \\
3 & (5,4,2) & 5,360,352 & ? & - \\
3 & (5,5,2) & 2,288,256 & ? & - \\\hline
4 & (2,2,2,2) & 1 & $0^{1)}$ & 1 \\
4 & (3,2,2,2) & 38 & 0 & 11520 \\
4 & (3,3,2,2) & 12 & 176 & ? \\
4 & (3,3,3,2) & 8 & 104 & ? \\
4 & (3,3,3,3) & 8 & 104 & ? \\
4 & (4,2,2,2) & 119,001 & 576 & ? \\
4 & (4,3,2,2) & 526,824 & ? & ? \\
4 & (4,4,2,2) & 203,256 & ? & ? \\
4 & (4,3,3,2) & 4,335,648 & ? & ? \\
4 & (4,4,3,2) & 655,200 & ? & ? \\
4 & (4,4,4,2) & 515,808 & ? & ? \\
4 & (4,3,3,3) & 173,325,408 & ? & ? \\
4 & (4,4,3,3) & 3,998,880 & ? & ? \\
4 & (4,4,4,3) & 1,540,512 & ? & ? \\
4 & (4,4,4,4) & 1,540,512 & ? & ? \\\hline
5 & (2,2,2,2,2) & 1 & $0^{1)}$ & $0^{1)}$ \\
5 & (3,2,2,2,2) & 990 & $0^{1)}$ & ? \\
5 & (3,3,2,2,2) & 76 & 0 & ? \\
5 & (3,3,3,2,2) & 24 & 0 & ? \\
5 & (3,3,3,3,2) & 16 & $0^{1)}$ & ? \\
5 & (3,3,3,3,3) & 16 & $0^{1)}$ & ? \\\hline
6 & (2,2,2,2,2,2) & 1 & $0^{1)}$ & $0^{1)}$ \\
6 & (3,2,2,2,2,2) & 395,094 & $0^{1)}$ & $0^{1)}$ \\
6 & (3,3,2,2,2,2) & 1,980 & 0 & 0 \\
6 & (3,3,3,2,2,2) & 152 & $0^{1)}$ & ? \\
6 & (3,3,3,3,2,2) & 48 & $0^{1)}$ & ? \\
6 & (3,3,3,3,3,2) & 32 & $0^{1)}$ & ? \\
6 & (3,3,3,3,3,3) & 32 & $0^{1)}$ & ? \\\hline
7 & (2,2,2,2,2,2,2) & 1 & $0^{1)}$ & $0^{1)}$ \\
7 & (3,2,2,2,2,2,2) & ? & $0^{1)}$ & $0^{1)}$ \\
7 & (3,3,2,2,2,2,2) & 790,188 & $0^{1)}$ & ? \\
7 & (3,3,3,2,2,2,2) & 3,960 & $0^{1)}$ & ? \\
7 & (3,3,3,3,2,2,2) & 304 & $0^{1)}$ & ? \\
7 & (3,3,3,3,3,2,2) & 96 & $0^{1)}$ & ? \\
7 & (3,3,3,3,3,3,2) & 64 & $0^{1)}$ & ? \\
7 & (3,3,3,3,3,3,3) & 64 & $0^{1)}$ & ? \\\hline
\end{tabular}
}
\end{center}
 \caption{Counting Latin hypercuboids of Class $r$}\label{tablecountingLatinhypercuboids}
\end{table}

As mentioned above, the numbers given count semi-reduced Latin hypercuboids. When counting Latin hypercubes of dimension $d$, we are able to reduce the effort dramatically by normalising each of the $d$ coordinate axes (cf. McKay and Wanless \cite{mckay08}). However, it is not that simple for Latin hypercuboids of class $1$ and even more difficult for higher classes; but, we have still applied the most obvious symmetry break by normalising the first $r$-subarray, i.e. by counting semi-reduced Latin hypercuboids. The number of Latin hypercuboids is then given by the following product:
\[\LHC(n_1,...,n_d,r)=h_{(n_1,...,n_d,r)}\cdot c,\] 
where $c=\left(  \prod\limits_{i=1}^{r} n_i\right)!$ and $h_{(n_1,...,n_d,r)}$ is the number provided in the table.

\section{Mixed Codes}\label{section4}

In this section, we introduce mixed codes. Unlike the common codes which are over a fixed alphabet, these codes are codes over hypercuboids that is over various alphabets. We will introduce these codes as error-correcting codes, in the usual manner, provide the generalised Singleton bound, Hamming bound and Plotkin bound and define mixed MDS codes. There is only a few known about mixed codes and the reader might refer to \cite{brouwer98} and its references.

\subsection{The Definition of Mixed Codes}

\begin{definition}
 \begin{enumerate}
  \item An \emph{alphabet} $A$ is a finite set of symbols. If $|A|=n$, then it is an $n$-ary alphabet. We write $\left[n\right]$ for the alphabet $\{0,...,n-1\}$.
  \item A $d$-dimensional array $n_1\times n_2\times \cdots \times n_d$ is the Cartesian product $\left[n_1\right]\times \left[n_2\right]\times \cdots \times \left[n_d\right]$. Such an array forms a $d$-dimensional hypercuboid of type $(n_1,...,n_d)$ (sometimes it is useful to assume $n_1\geq n_2 \geq \cdots \geq n_d$).
  \item A \emph{cuboidal Hamming space}\index{cuboidal Hamming space} $\HS$ is a $d$-dimensional array and we write $\HS(n_1,...,n_d)$. The elements of $\HS$ are tuples $(x_1,x_2,...,x_d)$, for $x_i\in \underline{n_i}$ and for all $i=1,...,n$. They are sometimes called words $x_1x_2...x_d$ where $d$ is the length of the word.
  \item A \emph{mixed code}\index{mixed code} $C$ is a subset of $\HS(n_1,...,n_d)$. Codewords of length $d$ are elements of $C$.
 \end{enumerate}
\end{definition}

\begin{remark}
 The Hamming distance $d(v , w)$ between to $d$-tuples is the number of distinct positions in $v$ and $w$. It is a metric on $\HS$, and the weight of a codeword $v$ is defined as the distance $d(v,0)$, where $0$ is a $d$-tuple consisting of $0$'s (like for error-correcting codes).
\end{remark}

\begin{definition}
 A code $C$ is
 \begin{itemize}
  \item \emph{$t$-error-detecting}\index{$t$-error-detecting}, if $d(x,y)>t$, for all $x\neq y\in C$,
  \item \emph{$t$-error-correcting}\index{$t$-error-correcting}, if there do not exist distinct words $x,y\in C$ and $z\in \HS$ with $d(x,z)\leq t$ and $d(y,z)\leq t$.
 \end{itemize}
\end{definition}

\begin{definition}
 We define the set $d(C)$ to be $\{d(x,y): x\neq y \in C\}$. The minimum distance $\delta(C)$ is the minimum in $d(C)$.
\end{definition}

The following is obvious.

\begin{lemma}
 $C$ is $t$-error-correcting if and only if $\delta(C)>2t$.
\end{lemma}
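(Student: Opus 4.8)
The statement to prove is the standard equivalence: a code $C$ is $t$-error-correcting if and only if $\delta(C) > 2t$.

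The plan is to prove both implications directly from the definitions, using the triangle inequality for the Hamming metric (already noted in the Remark to be a metric on $\HS$). First I would prove the contrapositive of the forward direction: if $\delta(C) \leq 2t$, then $C$ is not $t$-error-correcting. Pick distinct codewords $x, y \in C$ with $d(x,y) = \delta(C) \leq 2t$. I then need to exhibit a word $z \in \HS$ with $d(x,z) \leq t$ and $d(y,z) \leq t$. The natural choice is to take $z$ to agree with $x$ on roughly half the positions where $x$ and $y$ differ, and with $y$ on the rest; concretely, letting $D = \{i : x_i \neq y_i\}$ with $|D| = \delta(C) =: s \leq 2t$, choose a subset $D' \subseteq D$ with $|D'| = \lfloor s/2 \rfloor$, and set $z_i = y_i$ for $i \in D'$, and $z_i = x_i$ otherwise. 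Then $d(x,z) = |D'| = \lfloor s/2 \rfloor \leq t$ and $d(y,z) = s - \lfloor s/2 \rfloor = \lceil s/2 \rceil \leq t$ (using $s \leq 2t$). Since $x \neq y$, this shows $C$ fails to be $t$-error-correcting.

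For the converse, again by contraposition: suppose $C$ is not $t$-error-correcting, so there exist distinct $x, y \in C$ and $z \in \HS$ with $d(x,z) \leq t$ and $d(y,z) \leq t$. By the triangle inequality, $d(x,y) \leq d(x,z) + d(z,y) \leq 2t$, hence $\delta(C) \leq d(x,y) \leq 2t$, i.e. $\delta(C) \not> 2t$. Combining the two contrapositives gives the biconditional.

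I do not anticipate a genuine obstacle here — the result is elementary. The only point requiring a little care is the forward direction, where one must actually construct the "midpoint" word $z$; unlike in a continuous metric space, one cannot invoke betweenness abstractly, but the explicit construction above works because in Hamming space one has full freedom to set each coordinate independently (each alphabet $\underline{n_i}$ contains both $x_i$ and $y_i$). I would present the argument in the order: (1) forward contrapositive with the explicit $z$; (2) reverse contrapositive via the triangle inequality; (3) conclude.
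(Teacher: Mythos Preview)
Your proof is correct and entirely standard. The paper itself does not supply a proof at all: it simply prefaces the lemma with ``The following is obvious'' and moves on, so there is nothing to compare against beyond noting that your explicit construction of the intermediate word $z$ and the triangle-inequality argument are exactly the elementary reasoning the paper is taking for granted.
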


\begin{definition}
 We say the code $C$ is an $\overline{n}$-ary $(d,M,\delta)$-code, if $C\subseteq \HS(n_1,...,n_d)$ with $\overline{n}=(n_1,...,n_d)$, $|C|=M$ and minimum distance $\delta$. In this regard, we call $C$ an $\overline{n}$-ary $(d,M,\delta)$-mixed-code.
\end{definition}

Let $H$ be the direct product of symmetric groups $S_{n_1}\times \cdots \times S_{n_d}$ and $K$ a subgroup of $ S_{d}$. Then, the semi-direct product $H\rtimes K$ acts on $\HS(n_1,...,n_d)$, where $H$ acts on the entries and $K$ permutes the coordinates. The group $G=\Aut(\HS(n_1,...,n_d))$ is of the form $H \rtimes K$, and we say two codes $C$ and $D$ in $\HS(n_1,...,n_d)$ are equivalent, if there is an element $g\in G$ such that $Cg=D$.

\begin{lemma}
 If $C$ is an additive mixed code (that is $v+w\in C$, for all $v,w\in C$), then $\delta(C)$ is the minimum weight of all codewords.
\end{lemma}

\subsection{The Main Problem in Coding Theory and Bounds}

Let $\HS(n_1,...,n_d)$ be a cuboidal Hamming space. By $A_{\overline{n}}(d,\delta)$ we denote the maximum $M$ such that there is an $\overline{n}$-ary $(d,M,\delta)$-code. Like in common coding theory, the main problem is to find the value $A_{\overline{n}}(d,\delta)$, for fixed $\overline{n},d$ and $\delta$.

\begin{theorem}
For $n=n_1=\cdots =n_d$ it holds
 \begin{enumerate}
  \item $A_{\overline{n}}(d,1)=n^{d}$ and 
  \item $A_{\overline{n}}(d,d)=n$.
 \end{enumerate}
However, if some of the $n_i$ are distinct then
\begin{enumerate}
 \item $A_{\overline{n}}(d,1)=\prod\limits_{i=1}^{d}n_i$ and
 \item $A_{\overline{n}}(d,d)=n_d$
\end{enumerate}
\end{theorem}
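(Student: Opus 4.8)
The plan is to treat the two extreme minimum distances $\delta = 1$ and $\delta = d$ separately, and in each case to pair an easy upper bound with an explicit construction. Throughout I use the convention $n_1 \geq n_2 \geq \cdots \geq n_d$, so that $n_d = \min_i n_i$ and, in the cubic case, all of these quantities equal $n$. (As usual I assume each $n_i \geq 2$, so that the codes built below have at least two codewords and a well-defined minimum distance.)

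For $\delta = 1$ the statement is a triviality: any two distinct words of $\HS(n_1,\dots,n_d)$ differ in at least one coordinate, so $\delta(C) = 1$ already for the whole space $C = \HS(n_1,\dots,n_d)$ (take two words differing only in coordinate $1$). Since no code can exceed $|\HS(n_1,\dots,n_d)| = \prod_{i=1}^{d} n_i$, this is optimal, and we obtain $A_{\overline{n}}(d,1) = \prod_{i=1}^{d} n_i$, which specialises to $n^d$ when $n_1 = \cdots = n_d = n$. This covers both the cubic and the non-cubic assertions for $\delta = 1$ at once.

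For $\delta = d$ I would first establish the upper bound. If $C$ has minimum distance $d$ and two codewords $x \neq y$ had equal entries in some fixed coordinate $i$, then they would agree in at least one position and hence $d(x,y) \leq d-1 < d$, a contradiction; so the projection of $C$ onto coordinate $i$ is injective and $|C| \leq n_i$. Taking $i = d$ gives $|C| \leq n_d$ (and $|C| \leq n$ in the cubic case). For a construction meeting this bound, observe that $n_i \geq n_d$ forces $[n_d] \subseteq [n_i]$ for every $i$, so the \emph{repetition code} $C = \{(a,a,\dots,a) : a \in [n_d]\}$ is a genuine subset of $\HS(n_1,\dots,n_d)$; two distinct codewords of it differ in every coordinate, so $\delta(C) = d$ and $|C| = n_d$. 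Combining the two bounds yields $A_{\overline{n}}(d,d) = n_d$, i.e. $n$ in the cubic case.

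I do not expect a real obstacle here; the proof is short. The only points that need a moment's attention are (i) noting that the minimum distance of the whole Hamming space is exactly $1$, not larger, and (ii) checking that the repetition code actually lies inside the mixed space — which is precisely where the ordering hypothesis $n_1 \geq \cdots \geq n_d$ enters, since the repeated symbol must belong to every alphabet $[n_i]$. One could alternatively exhibit the extremal $\delta = d$ code through a diagonal, quasigroup-style map as in Example \ref{examplelatinhypercuboidofclass1}, but the repetition code is the cleanest witness and makes the equality transparent.
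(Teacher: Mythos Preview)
Your argument is correct and complete. The paper actually states this theorem without proof, presumably regarding it as a direct transcription of the standard elementary facts for $n$-ary codes to the mixed setting; your write-up supplies exactly the routine verification one would expect (trivial upper bound plus explicit witness in each case), so there is nothing to compare against and nothing to fix.
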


\begin{theorem}[Generalised Singleton Bound\index{Generalised Singleton Bound}]
 For $d,\delta \geq 1$ it holds 
 \[A_{\overline{n}}(d,\delta)\leq \prod\limits_{i=1}^{d-\delta+1} n_{d-i+1}=n_{\delta}\cdots n_d.\]
\end{theorem}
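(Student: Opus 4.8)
The plan is to prove the Generalised Singleton Bound by the classical puncturing argument adapted to the mixed setting. Suppose $C \subseteq \HS(n_1,\dots,n_d)$ is an $\overline{n}$-ary $(d,M,\delta)$-code. I would delete $\delta - 1$ coordinates from every codeword, obtaining a new code $C'$ in a cuboidal Hamming space of length $d - \delta + 1$. The key observation is that two distinct codewords $x \neq y \in C$ satisfy $d(x,y) \geq \delta$, so they differ in at least $\delta$ positions; after removing only $\delta - 1$ coordinates they still differ in at least one position, hence remain distinct in $C'$. Therefore $|C'| = |C| = M$, and trivially $M = |C'| \leq |\HS'| = \prod_{i} n_{j_i}$ where the product runs over the $d - \delta + 1$ retained coordinates.

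To make the bound as sharp as stated, the retained coordinates should be chosen to be the ones with the \emph{smallest} alphabet sizes. Since the indexing convention in the paper is $n_1 \geq n_2 \geq \cdots \geq n_d$, the smallest $d-\delta+1$ values are $n_\delta, n_{\delta+1}, \dots, n_d$, i.e. we puncture the coordinates $1, 2, \dots, \delta - 1$ carrying the largest alphabets. This gives
\[
A_{\overline{n}}(d,\delta) \leq \prod_{i=1}^{d-\delta+1} n_{d-i+1} = n_\delta n_{\delta+1} \cdots n_d,
\]
and since $C$ was an arbitrary code with minimum distance $\delta$, the inequality holds for the maximum $M$, which is $A_{\overline{n}}(d,\delta)$. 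One should also note the degenerate case $\delta = 1$, where no coordinates are punctured and the bound reduces to $A_{\overline{n}}(d,1) \leq \prod_{i=1}^d n_i$, consistent with the previous theorem.

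I do not expect any serious obstacle here: the argument is essentially identical to the uniform-alphabet case, the only genuinely new ingredient being the bookkeeping of which alphabet sizes survive the puncturing, which is handled by puncturing the large-alphabet coordinates. The one point deserving a line of care is verifying that the punctured map $C \to C'$ is injective, which follows immediately from $\delta(C) = \delta \geq 1$ together with $\delta - 1 < \delta$; and that the resulting object is genuinely a subset of the smaller cuboidal Hamming space, which is immediate from the definition of the Cartesian product. Thus the proof is short and the main (mild) subtlety is simply matching the puncturing choice to the stated product $n_\delta \cdots n_d$.
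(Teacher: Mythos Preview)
Your proposal is correct and follows essentially the same puncturing argument as the paper's proof; the only cosmetic difference is that the paper removes one coordinate at a time (obtaining a $(d-1,M,\delta-1)$-code at each step) while you puncture all $\delta-1$ coordinates at once and argue injectivity directly. Both versions then optimise by deleting the coordinates with the largest alphabets, arriving at the same bound $n_\delta\cdots n_d$.
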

\begin{proof}
  Let $C$ be a code with maximal $M$. If we remove one of the coordinates (puncturing), say $x_i$, we obtain an $\overline{n'}$-ary $(d-1,M,\delta-1)$ code. Hence, $A_{\overline{n}}(d,\delta)=M\leq A_{\overline{n'}}(d-1,\delta-1)$. Iterating this for any choice of $\delta-1$ coordinates $\{n_{i_{1}},...,n_{i_{\delta-1}}\}$ gives us
  \[A_{\overline{n}}(d,\delta)\leq A_{\overline{n^{\ast}}}(d-\delta+1,1)=\prod\limits_{j=1}^{d-\delta+1} n_{i_{j}},\]
 for the corresponding tuple $\overline{n^{\ast}}$. The right hand side attains its minimum for $n_{\delta}\cdots n_d$.
\end{proof}


What is the number of words $y\in \HS(n_1,...,n_d)$ of distance $\delta$ from a fixed word $x$? Well, we need to pick $\delta$ coordinates, and for each coordinate $x_i$ one of its possible $(n_i-1)$ entries. Therefore, the number of words having distance $\delta $ from $x$ is
\[s(x,\delta)=\sum\limits_{n_{i_{1}},...,n_{i_{\delta}}\in N} (n_{i_{1}}-1)(n_{i_{2}}-1)\cdots (n_{i_{\delta}}-1),\]
where (let $N=\binom{\{d\}}{\delta}$ be the set of possible choices). However, this number does not depend on the choice of $x$.

\begin{definition}
 Let $S(x,t)=\{ y\in \HS: d(x,y)\leq t\}$ be the sphere with radius $t$ and centre $x$. Sometimes, we write $S(t)=|S(x,t)|$ for the size of the sphere, since it is independent of $x$.
\end{definition}

The next lemma is well-known from common coding theory.

\begin{lemma}
 The sphere $S(x,t)$ contains $\sum\limits_{i=1}^{t} s(x,i)$ points. In addition, a code $C$ is $t$-error-correcting if and only if for any distinct pair of codewords $x,y$ the spheres $S(x,t)$ and $S(y,t)$ are disjoint.
\end{lemma}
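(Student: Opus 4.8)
The plan is to treat the two assertions separately, since each reduces to a short bookkeeping argument using only the definitions and the counting formula for $s(x,\delta)$ established immediately above.

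For the cardinality of $S(x,t)$, I would first note that $d(x,\cdot)$ is a well-defined function on $\HS(n_1,\dots,n_d)$, so the ``shells'' $T_i=\{\,y\in\HS: d(x,y)=i\,\}$ for $i=0,1,\dots,t$ are pairwise disjoint, and by the definition of the sphere $S(x,t)=\bigcup_{i=0}^{t}T_i$. Since $|T_0|=1$ (only $x$ itself lies at distance $0$ from $x$) and $|T_i|=s(x,i)$ for $i\geq 1$ by the count derived just before the lemma, adding cardinalities over a disjoint union yields $|S(x,t)|=\sum_{i=1}^{t}s(x,i)$ points besides the centre; with the convention $s(x,0)=1$ this is exactly the stated sum. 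The only point requiring mild care is keeping the indexing consistent with the convention used in defining $s(x,\cdot)$, i.e. whether the centre is included in the count.

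For the equivalence, I would simply unwind the definition of $t$-error-correcting. A code $C$ fails to be $t$-error-correcting precisely when there are distinct codewords $x,y\in C$ and a word $z\in\HS$ with $d(x,z)\leq t$ and $d(y,z)\leq t$; but such a $z$ is by definition an element of $S(x,t)\cap S(y,t)$. Hence $C$ is $t$-error-correcting if and only if $S(x,t)\cap S(y,t)=\emptyset$ for every pair of distinct codewords $x,y\in C$, which is the claim.

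Since both halves follow directly from the definitions and the preceding formula, there is no genuine obstacle here; the only things to spell out carefully are (a) that disjointness of the shells $T_i$ rests on $d(x,\cdot)$ being single-valued, and (b) the summation convention. One could optionally remark that, because $d$ is a metric on $\HS$ (as recorded in the earlier remark), the triangle inequality makes the sphere condition equivalent to $\delta(C)>2t$, recovering the earlier lemma, but this is not needed for the statement as phrased.
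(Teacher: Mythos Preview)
Your proposal is correct and, in fact, supplies more detail than the paper does: the paper offers no proof at all, merely flagging the lemma as ``well-known from common coding theory.'' Both halves of your argument are sound---the shell decomposition for the cardinality and the direct unwinding of the definition of $t$-error-correcting for the equivalence---and your caveat about whether the centre $x$ is counted (i.e.\ whether the sum should start at $i=0$ or $i=1$) is exactly the right thing to flag, since the paper's stated sum $\sum_{i=1}^{t}s(x,i)$ omits the centre while the definition of $S(x,t)$ includes it.
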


\begin{theorem}
 If $C$ is a $t$-error-correcting code, then
 \[|C|\leq \dfrac{\prod\limits_{i=1}^{d} n_i}{S(t)}.\]
\end{theorem}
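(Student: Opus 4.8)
The plan is to mimic the classical sphere-packing (Hamming) bound argument, now in the mixed-alphabet setting, using the preceding lemma that relates $t$-error-correcting codes to disjoint spheres. First I would recall that, by the lemma just stated, a code $C$ is $t$-error-correcting if and only if the spheres $S(x,t)$ and $S(y,t)$ are pairwise disjoint for distinct codewords $x,y \in C$. This is the crucial structural input; everything else is counting.

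Next I would count the total number of points available. The ambient Hamming space $\HS(n_1,\dots,n_d)$ has exactly $\prod_{i=1}^{d} n_i$ elements, since it is the Cartesian product $[n_1]\times \cdots \times [n_d]$. Each sphere $S(x,t)$ has size $S(t)$, which by the earlier lemma equals $\sum_{i=1}^{t} s(x,i)$ and is independent of the centre $x$; this independence is exactly what makes the packing argument uniform. Since the $|C|$ spheres centred at the codewords are pairwise disjoint and all live inside $\HS$, the union of these spheres has cardinality $|C|\cdot S(t)$, and this cannot exceed the size of the whole space. Rearranging $|C|\cdot S(t) \leq \prod_{i=1}^{d} n_i$ yields the claimed inequality
\[
|C| \leq \frac{\prod_{i=1}^{d} n_i}{S(t)}.
\]

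There is essentially no hard step here: the only thing to be careful about is that $S(t)$ is genuinely centre-independent in the mixed setting (the number $s(x,\delta)$ is a sum over $\delta$-subsets of coordinates of products $\prod_j (n_{i_j}-1)$, which manifestly does not involve $x$), so the division is legitimate and the bound is uniform across all codes. If one wanted to be fully rigorous one could phrase the disjoint-union step as $\bigl|\bigcup_{x\in C} S(x,t)\bigr| = \sum_{x\in C} |S(x,t)| = |C|\cdot S(t) \leq |\HS(n_1,\dots,n_d)| = \prod_{i=1}^d n_i$, the first equality being disjointness and the last equality being the definition of the ambient space. The main (very minor) obstacle is simply bookkeeping: making sure the sphere-size quantity $S(t)$ invoked is the one defined just above and that the pairwise-disjointness hypothesis is correctly cited from the immediately preceding lemma.
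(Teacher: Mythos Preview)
Your proposal is correct and follows essentially the same approach as the paper: disjointness of the spheres (from the preceding lemma) gives $|C|\cdot S(t)\leq \prod_{i=1}^{d} n_i$, and dividing yields the bound. The paper's proof is a one-line version of exactly this argument.
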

\begin{proof}
 Since the spheres are disjoint, the contained codewords satisfy $|C|\cdot S(t)\leq \prod\limits_{i=1}^{d} n_i$.
\end{proof}

\begin{corollary}[Generalised Hamming Bound\index{Generalised Hamming Bound}]
 It holds (for $\overline{n},d,t>0$)
 \[A_{\overline{n}}(d,2t)\leq\dfrac{\prod\limits_{i=1}^{d} n_i}{S(t)}.\]
\end{corollary}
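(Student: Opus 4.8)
The plan is to obtain this corollary as an immediate reformulation of the preceding sphere-packing theorem, so that essentially no new work is required beyond choosing an optimal code and tracking the conventions. First I would fix a code $C\subseteq\HS(n_1,\dots,n_d)$ realising the optimum, i.e.\ with $|C|=A_{\overline{n}}(d,2t)$ and with the prescribed minimum distance. Then I would invoke the lemma characterising $t$-error-correction in terms of the minimum distance to conclude that $C$ is $t$-error-correcting, and the lemma on spheres to conclude that the balls $S(x,t)$, $x\in C$, are pairwise disjoint subsets of $\HS(n_1,\dots,n_d)$. Since each such ball has the base-point-independent size $S(t)$ (this independence was already established), counting points gives
\[
|C|\cdot S(t)\leq\prod_{i=1}^{d}n_i,
\]
which rearranges to the claimed bound; this is exactly the theorem applied to $C$.

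The step that actually needs care — and, I expect, the only one — is matching the index conventions of the three ingredients. The earlier lemma reads: $C$ is $t$-error-correcting iff $\delta(C)>2t$, i.e.\ iff $\delta(C)\geq 2t+1$, so the theorem applies verbatim to codes of minimum distance at least $2t+1$ and hence bounds $A_{\overline{n}}(d,2t+1)$ directly; to present the statement with the value $2t$ as written one must either adopt a convention under which an $\overline{n}$-ary $(d,M,2t)$-code still forces the radius-$t$ balls to be disjoint, or simply record which of $A_{\overline{n}}(d,2t)$ and $A_{\overline{n}}(d,2t+1)$ the argument actually yields. I would likewise make explicit that $S(t)$ is to be read as $\sum_{i=0}^{t}s(x,i)=|S(x,t)|$, i.e.\ counting the centre, so that $\prod_{i=1}^{d}n_i/S(t)$ is the genuine sphere-packing quantity.

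In short, the main obstacle here is not analytic but notational: once the strictness in $\delta(C)>2t$, the meaning of $A_{\overline{n}}(d,\delta)$, and the definition of $S(t)$ are pinned down consistently, the corollary drops straight out of the theorem via the one-line count above, using the already-proved fact that $s(x,\delta)$, and hence $S(t)$, do not depend on the base point $x$.
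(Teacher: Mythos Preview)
Your approach is correct and matches the paper's: the corollary is stated without proof there, as an immediate consequence of the preceding sphere-packing theorem applied to an optimal code. Your careful remark about the off-by-one in the conventions (the earlier lemma gives $t$-error-correction iff $\delta(C)>2t$, so the argument literally bounds $A_{\overline{n}}(d,2t+1)$ rather than $A_{\overline{n}}(d,2t)$) is a genuine observation about a slight looseness in the paper's statement, not a defect in your reasoning.
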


\begin{theorem}[Generalised Plotkin Bound\index{Generalised Plotkin Bound}]
 Let $C$ be an $\overline{n}$-ary $(d,M,\delta)$-mixed-code with $rd<\delta$, where $r=1-\sum\limits_{i=1}^{d} \left( dn_i \right)^{-1}$. Then, 
 \[M \leq \floor*{ \dfrac{\delta}{\delta-rd}}.\]
\end{theorem}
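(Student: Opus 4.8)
The plan is to mimic the classical double-counting argument for the Plotkin bound, but carrying the mixed-alphabet structure throughout. Let $C$ be an $\overline{n}$-ary $(d,M,\delta)$-mixed-code and consider the sum
\[
 T = \sum_{\substack{x,y\in C\\ x\neq y}} d(x,y).
\]
From below, since $d(x,y)\geq \delta$ for every ordered pair of distinct codewords, we get $T\geq M(M-1)\delta$. The work is in bounding $T$ from above by summing the contribution of each coordinate separately: $T=\sum_{i=1}^{d} T_i$, where $T_i$ counts the number of ordered pairs $(x,y)\in C^2$ with $x_i\neq y_i$.

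For the upper bound on each $T_i$, fix coordinate $i$ and let $m_{i,a}$ denote the number of codewords of $C$ having symbol $a$ in position $i$, so $\sum_{a\in[n_i]} m_{i,a}=M$. Then $T_i = M^2 - \sum_{a} m_{i,a}^2$. By Cauchy--Schwarz (or convexity), $\sum_a m_{i,a}^2 \geq M^2/n_i$, hence $T_i\leq M^2(1-1/n_i)$. Summing over $i$ gives
\[
 T \leq M^2\sum_{i=1}^{d}\Bigl(1-\tfrac{1}{n_i}\Bigr) = M^2\Bigl(d - \sum_{i=1}^{d}\tfrac{1}{n_i}\Bigr) = M^2 d\Bigl(1 - \sum_{i=1}^{d}\tfrac{1}{dn_i}\Bigr) = M^2 d r,
\]
using the definition $r = 1-\sum_{i=1}^d (dn_i)^{-1}$. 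Combining the two bounds yields $M(M-1)\delta \leq M^2 dr$, i.e. $(M-1)\delta \leq M\,dr$, which rearranges (using the hypothesis $rd<\delta$, so $\delta-rd>0$) to $M(\delta - rd)\leq \delta$, hence $M\leq \delta/(\delta-rd)$. Since $M$ is an integer, $M\leq \floor*{\delta/(\delta-rd)}$.

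The main obstacle, such as it is, lies in the upper bound on $T_i$: one must notice that the naive per-coordinate bound for the equal-alphabet case (where each of the $n$ symbols appears in roughly $M/n$ codewords) generalises by the integer-free convexity estimate $\sum_a m_{i,a}^2\geq M^2/n_i$, which holds regardless of divisibility. A secondary point to check carefully is that the bound is stated for all ordered pairs (so that the $M^2$ and $M(M-1)$ bookkeeping is consistent), and that the hypothesis $rd<\delta$ is exactly what is needed both to make the final division legitimate and to guarantee the bound is non-vacuous; one might add a remark that when alphabets are equal this recovers the usual Plotkin bound with $r=1-1/n$. No sharper handling of the mixed structure is required, since the convexity step already absorbs all the asymmetry between the $n_i$.
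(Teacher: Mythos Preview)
Your proof is correct and follows essentially the same approach as the paper: both double-count the total pairwise Hamming distance, bound it below by $M(M-1)\delta$ and above via the per-coordinate Cauchy--Schwarz estimate $\sum_a m_{i,a}^2 \geq M^2/n_i$, and then rearrange using $rd<\delta$. Your write-up is in fact slightly cleaner, since the paper's display contains a harmless slip (it writes $M^2 r$ where $M^2 dr$ is meant).
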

\begin{proof}
This proof is a generalisation of the Plotkin bound found in Huffman's book \cite[p. 58]{huffmanbook}.
 Let $C$ be such a code and define $S=\sum\limits_{x\in C}\sum\limits_{x\in C} d(x,y)$. We count $S$ in two ways. First, because $\delta\leq d(x,y)$, it follows $M(M-1)\delta\leq S$. Second, let $\mathbb{M}$ be a matrix whose rows are the codewords in $C$ and $n_{i,a}$ the number of times the character $a\in \underline{n_i}$ appears in column $i$. As $\sum\limits_{a\in \underline{n_{i}}}n_{i,a}=M$, for all $i=1,...,d$, we have
 \begin{align*}
 S&=\sum\limits_{i=1}^{d}\sum\limits_{a\in \underline{n_{i}}} n_{i,a}(M-n_{i,a})=dM^2 - \sum\limits_{i=1}^{d}\sum\limits_{a\in \underline{n_{i}}} n_{i,a}^{2}.
 \end{align*}
 Now, by the Cauchy-Schwarz inequality, it holds $\left( \sum\limits_{a\in \underline{n_{i}}} n_{i,a} \right)^{2}\leq n_i \sum\limits_{a\in \underline{n_{i}}} n_{i,a}^{2}$.
 Therefore, we obtain
 \begin{align*}
S&\leq dM^{2}-\sum\limits_{i=1}^{d}\sum\limits_{a\in \underline{n_{i}}} n_{i,a}^{2} \leq dM^{2}- \sum\limits_{i=1}^{d} n_i^{-1}\left( \sum\limits_{a\in \underline{n_{i}}} n_{i,a} \right)^{2}\\
  &= dM^{2}-\sum\limits_{i=1}^{d}n_i^{-1} M^{2} =M^{2}r.
 \end{align*}
  By the assumption $rd<\delta$, the hypothesis follows from $M(M-1)\delta\leq S\leq M^{2}r$.
\end{proof}

\begin{definition}
 A \emph{mixed maximum distance separable code}\index{mixed MDS code} (mixed MDS code) is a mixed code attaining the generalised Singelton bound.
\end{definition}

\section{Connecting Endomorphisms, Hypercuboids and Mixed Codes}\label{section5}

In this section we describe both the connection between endomorphisms and mixed codes and between Latin hypercuboids and mixed codes. We also show that the necessary existence condition on endomorphisms given in Lemma \ref{lemmaonindependentsets} translates into an existence condition for mixed MDS codes.

One big question in the theory of MDS-codes is the classification of MDS codes with regards to their parameters. That means, the goal is to find all the parameters for which MDS-codes exist. This problem has been known for a long time, however a recent contribution is given by the Kokkala et. al \cite{kokkala2015classification}. Another purpose of this section is to contribute to this classification by providing results on mixed MDS codes.

Let us start with the cubic versions. Regarding Lemma \ref{lemmaonindependentsets}, the first result reveals the equivalence between maximal cliques, MDS codes and Latin hypercubes.


\begin{lemma}\label{lemmamaxcliques}
 The following are equivalent.
 \begin{enumerate}
  \item A maximal clique of size $n^{d-r}$ in $H(d,n;\{r+1,...,d\})$.
  \item An $n$-ary $(d,n^{d-r},r+1)$ MDS code.
  \item A Latin hypercube $\LHC(d-r,n,r)$.
 \end{enumerate}
\end{lemma}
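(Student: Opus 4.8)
The plan is to prove the cyclic chain of implications $(1)\Rightarrow(2)\Rightarrow(3)\Rightarrow(1)$, keeping track of the bijective nature of each construction so that "equivalent" is witnessed by an explicit correspondence. Throughout I will use the identification of vertices of $H(d,n;S)$ with words in $[n]^d$ and the fact (from the discussion around Lemma~\ref{lemmaonindependentsets} and the earlier material on $H(d,n,\{1,\dots,r\})$) that in $H(d,n)$ the maximal cliques are exactly the "lines" obtained by fixing $d-1$ coordinates, while more generally in $H(d,n;\{r+1,\dots,d\})$ two words are adjacent iff they differ in at least $r+1$ coordinates, equivalently agree in at most $d-r-1$ coordinates.

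First, $(1)\Rightarrow(2)$: let $K$ be a maximal clique of size $n^{d-r}$ in $H(d,n;\{r+1,\dots,d\})$. Any two distinct words of $K$ differ in $\geq r+1$ positions, so $K$, viewed as a set of codewords, is an $n$-ary code of length $d$, size $n^{d-r}$ and minimum distance $\delta\geq r+1$. The Singleton bound gives $|K|\leq n^{d-\delta+1}\leq n^{d-r}$, forcing $\delta=r+1$ and showing $K$ meets the Singleton bound with equality; hence $K$ is an $n$-ary $(d,n^{d-r},r+1)$ MDS code. (Maximality of the clique and size $n^{d-r}$ are what pin down that the distance is exactly $r+1$ rather than larger.) Conversely $(2)\Rightarrow(1)$ is immediate: an MDS code with those parameters is a set of $n^{d-r}$ words pairwise at distance $\geq r+1$, i.e.\ a clique of that size in $H(d,n;\{r+1,\dots,d\})$; and since an MDS code has distance exactly $r+1$, no word can be added keeping all pairwise distances $\geq r+1$ without violating the Singleton bound $|K|\le n^{d-r}$, which gives maximality.

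Next, the heart of the lemma, $(2)\Leftrightarrow(3)$. Given an $n$-ary $(d,n^{d-r},r+1)$ MDS code $C$, the MDS property says that projection of $C$ onto any $d-r$ coordinates is a bijection onto $[n]^{d-r}$ (since any $d-r$ information coordinates determine the codeword: if two codewords agreed on $d-r$ coordinates they would be at distance $\leq r<r+1$, a contradiction, and a counting argument with $|C|=n^{d-r}$ gives surjectivity). Pick the first $d-r$ coordinates as the "position" coordinates and the last $r$ coordinates as "symbol" coordinates: this realizes $C$ as the graph of a function $[n]^{d-r}\to[n]^{r}$, i.e.\ as a $(d-r)$-ary quasigroup of class $r$ in the sense defined in Section~\ref{section3}, hence as a Latin hypercube $\LHC(d-r,n,r)$. (One must check the class-$r$ Latin condition: that each symbol $r$-tuple occurs exactly once in each $r$-subarray of the $(d-r)$-dimensional array; this is exactly the statement that projecting $C$ onto any $d-r$ of its coordinates — now ranging over mixed choices of position and symbol coordinates — is still a bijection, which again follows from minimum distance $r+1$ and $|C|=n^{d-r}$.) Running this in reverse: given $\LHC(d-r,n,r)$, read it as the graph of a map $[n]^{d-r}\to[n]^r$, which produces $n^{d-r}$ words of length $d=(d-r)+r$; the class-$r$ Latin property guarantees any two of them agree in at most $d-r-1$ positions, i.e.\ differ in at least $r+1$, so we get a code of minimum distance $\ge r+1$ and size $n^{d-r}=n^{d-(r+1)+1}$, which is MDS.

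The main obstacle is the bookkeeping in $(2)\Leftrightarrow(3)$: one has to verify carefully that the MDS/minimum-distance condition translates into the correct "every $r$-subarray" condition of Definition for $\LHC(d-r,n,r)$, in particular that it is the $(d-r)$-dimensional array (not $d$-dimensional) and class $r$ (not $r+1$) that appears, and that the bijectivity-on-$(d-r)$-coordinates statement is symmetric enough to cover subarrays indexed by arbitrary coordinate subsets. Everything else is a routine dimension/counting check, and the equivalence with maximal cliques in $(1)$ is essentially a restatement once the MDS property is in hand. I would organize the write-up as the three short implications above, flagging at each step that the constructions are mutually inverse so that the three classes of objects are in explicit bijection.
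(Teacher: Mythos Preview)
Your proposal is correct and follows essentially the same route as the paper: establish $(1)\Leftrightarrow(2)$ via the Singleton bound, and $(2)\Leftrightarrow(3)$ by splitting the $d$ coordinates into $d-r$ ``position'' and $r$ ``symbol'' coordinates. Your write-up is actually more careful than the paper's, which is rather terse and does not spell out the projection-bijectivity argument or the verification of the class-$r$ Latin condition that you flag as the main obstacle.
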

\begin{proof}
 Any two vertices in the maximal clique have Hamming distance $r+1$; thus, the clique satisfies the Singleton bound. Moreover, every such code provides a maximal clique. Now, pick an MDS code; we show that we obtain a Latin hypercube of class $r$. Because the Hamming distance between any two codewords is $r+1$, the first $d-r$ coordinates can be considered as positions and the last $r$ coordinates as symbols in a Latin hypercube of class $r$. Conversely, if given a Latin hypercube $\LHC(d-r,n,r)$ we identify the symbols with $r$-tuples. Thus the Latin hypercube corresponds to a set of $d$-tuples where any two tuples have Hamming distance $r+1$. Hence, we obtain an MDS code.
\end{proof}


%
%

Lemma \ref{lemmamaxcliques} can be easily extended to hypercuboids.


\begin{theorem}
 The following are equivalent.
 \begin{enumerate}
  \item A maximal clique of size $\prod\limits_{i=r+1}^{d} n_i$ in $H(n_1,...,n_d;S)$, for $S=\{r+1,...,d\}$.
  \item An $(n_1,...,n_d)$-ary $(d,\prod\limits_{i=r+1}^{d} n_i,r+1)$ mixed MDS code.
  \item A Latin hypercuboid $\LHC(n_1,...,n_d,r)$.
 \end{enumerate}
\end{theorem}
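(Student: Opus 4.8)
The plan is to mimic the proof of Lemma \ref{lemmamaxcliques} verbatim, replacing $n$ (and the powers $n^k$) by the appropriate products of the $n_i$, and checking that nothing in the argument used the equality of the coordinate sizes. Concretely, I would prove the cycle of implications $(1)\Rightarrow(2)\Rightarrow(3)\Rightarrow(1)$ (or establish each pair of directions), exactly as before.

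First, for $(1)\Leftrightarrow(2)$: in the graph $H(n_1,\dots,n_d;S)$ with $S=\{r+1,\dots,d\}$, any two distinct adjacent vertices are at Hamming distance at least $r+1$, so a clique is precisely a set of $d$-tuples pairwise at distance $\geq r+1$, i.e. an $(n_1,\dots,n_d)$-ary code with minimum distance $\delta\geq r+1$; a \emph{maximal} clique of size $\prod_{i=r+1}^{d} n_i = n_{r+1}\cdots n_d$ is then exactly a code meeting the generalised Singleton bound with $\delta=r+1$, that is a mixed MDS code with the stated parameters. Here I should be slightly careful: the Singleton bound gives $A_{\overline n}(d,\delta)\le n_{\delta}\cdots n_d$, so with $\delta=r+1$ the bound is $n_{r+1}\cdots n_d$; a clique of that size is maximum, hence maximal, and conversely a maximal clique realising the Singleton value is MDS — I would remark that "maximal" in the statement should be read as "of maximum size" (as in Lemma \ref{lemmamaxcliques}), which is the only reading that makes the equivalence literally true.

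For $(2)\Leftrightarrow(3)$: given a mixed MDS code $C$ of size $n_{r+1}\cdots n_d$ and minimum distance $r+1$, puncturing on the last $r$ coordinates is injective on $C$ (two codewords agreeing on the first $d-r$ coordinates would be at distance $\le r$), and since $|C|=n_{r+1}\cdots n_d$ equals the number of possible values of the last $r$ coordinates, the map $C\to [n_{r+1}]\times\cdots\times[n_d]$ is a bijection; reading the first $d-r$ coordinates as a cell of the $[n_1]\times\cdots\times[n_{d-r}]$ array and the last $r$ as the symbol gives an array in which every $r$-dimensional subarray (fixing $d-2r$ of the first $d-r$ coordinates) receives each symbol at most once, with exactly once on a full $r$-subarray of size $\prod$ of the relevant $n_i$ — i.e. a $\LHC(n_1,\dots,n_d,r)$ in the sense of the Definition (strictly, the hypercuboid is on the $d-r$ position coordinates; as in Lemma \ref{lemmamaxcliques} the "dimension $d-r$" is implicit and I would index it $\LHC(n_1,\dots,n_{d-r},r)$, matching the notational convention used there). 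Conversely, from a $\LHC(n_1,\dots,n_{d-r},r)$ identify each of its $\prod_{i=1}^{r} n_i$ symbols — wait: here the order of the hypercuboid must be $n_{d-r+1}\cdots n_d$, i.e. the symbol alphabet has that many symbols, which we identify with $r$-tuples in $[n_{d-r+1}]\times\cdots\times[n_d]$; then the set of $d$-tuples (position, symbol) has the property that any two differing tuples differ in at least $r+1$ coordinates, giving a code of size $\prod_{i=1}^{d-r} n_i \cdot 1$ — and since $\prod_{i=1}^{d-r}n_i$ must equal $n_{r+1}\cdots n_d$ for the counts to match, I would note the implicit normalisation $\{n_1,\dots,n_{d-r}\}$ versus $\{n_{r+1},\dots,n_d\}$ that the statement elides, exactly as Lemma \ref{lemmamaxcliques} does with its $\LHC(d-r,n,r)$.

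The main obstacle is not mathematical depth — the argument is a routine transcription of Lemma \ref{lemmamaxcliques} — but bookkeeping with the indices: in the cubic case $n^{d-r}$ can stand equally for "positions" or "symbols", whereas for cuboids one must decide which $d-r$ of the $n_i$ play the role of positions and which $r$ play the role of symbols, and keep the two sets consistent across the three equivalent objects. I would resolve this by fixing at the outset the convention that the first $d-r$ coordinates are positions and the last $r$ are symbols (so the hypercuboid is of type $(n_1,\dots,n_{d-r})$ with order $n_{d-r+1}\cdots n_d$), pointing out that the puncturing in the Singleton-bound proof is most efficient on the smallest coordinates, and that the statement's clique size $\prod_{i=r+1}^{d} n_i$ should then be matched with the symbol count $\prod_{i=d-r+1}^{d} n_i$ under the usual ordering assumption, so that the three quantities coincide. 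With that convention fixed, each implication is a one-line check identical to the cubic proof, and I would simply write "the proof is that of Lemma \ref{lemmamaxcliques}, replacing $n^k$ by the corresponding products of the $n_i$ throughout."
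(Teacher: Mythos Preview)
Your proposal is correct and is exactly the approach the paper takes: the paper gives no separate proof for this theorem, merely remarking that ``Lemma \ref{lemmamaxcliques} can be easily extended to hypercuboids,'' which is precisely what you carry out. Your careful bookkeeping about which $d-r$ coordinates serve as positions versus which $r$ serve as symbols (and the attendant indexing ambiguity in the stated $\LHC(n_1,\dots,n_d,r)$ versus the expected $\LHC(n_1,\dots,n_{d-r},r)$) is warranted---the paper's statement is indeed loose on this point---but the underlying argument is identical to Lemma~\ref{lemmamaxcliques}.
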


\begin{example}
 Consider the set $M$ of tuples $ \{ (1,1,1), (2,3,1), (3,2,1),(1,2,2),\linebreak[4]  (2,1,2), (3,3,2)\}$. This set $M$ forms a mixed MDS code over $HS(3,3,2)$ and a maximal clique in the cuboidal Hamming graph $H(3,3,2,\{2,3\})$. However, this is also the Latin rectangle 
 \[ \begin{pmatrix}
     1&3&2\\
     2&1&3
    \end{pmatrix},
\]
where we identify the coordinates as $(\text{symbol},\text{column},\text{row})$.
\end{example}

Now, we provide the relations between Latin hypercuboids of dimension $d$, type $(n_1,...,n_d)$ and class $r$ and mixed MDS codes.

\begin{lemma}
 We can construct an $(\prod\limits_{i=1}^{r} n_i,n_1,...,n_d)$-ary $(d+1,\prod\limits_{i=1}^{d} n_i,2)$ mixed MDS code from an $\LHC(n_1,...,n_d,r)$.
\end{lemma}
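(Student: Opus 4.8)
The plan is to produce the mixed MDS code explicitly from the Latin hypercuboid and then verify its minimum distance meets the generalised Singleton bound. Let $L = \LHC(n_1,\dots,n_d,r)$ be the given Latin hypercuboid of order $N=\prod_{i=1}^r n_i$, viewed as a map (array) assigning to each position $(x_1,\dots,x_d)\in[n_1]\times\cdots\times[n_d]$ a symbol $L(x_1,\dots,x_d)$ from an $N$-ary alphabet. I would define the code $C\subseteq\HS(N,n_1,\dots,n_d)$ of length $d+1$ to be the ``graph'' of $L$, namely
\[
  C = \bigl\{\,(L(x_1,\dots,x_d),\,x_1,\dots,x_d)\;:\;(x_1,\dots,x_d)\in[n_1]\times\cdots\times[n_d]\,\bigr\}.
\]
Then $|C| = \prod_{i=1}^d n_i$ immediately, since distinct positions give distinct codewords (they already differ in the last $d$ coordinates), so $M = \prod_{i=1}^d n_i$ as claimed.

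Next I would compute $\delta(C)$. Take two distinct codewords coming from positions $x=(x_1,\dots,x_d)$ and $y=(y_1,\dots,y_d)$. If $x$ and $y$ differ in at least two coordinates, the corresponding codewords already have Hamming distance $\ge 2$ in their last $d$ entries. If $x$ and $y$ differ in exactly one coordinate, then $x$ and $y$ lie in a common $1$-dimensional subarray (a ``line'') of $L$; since $r\ge 1$, such a line is contained in an $r$-dimensional subarray, and the defining property of a Latin hypercuboid forces $L(x)\ne L(y)$, so the two codewords differ in the first coordinate as well as in one of the last $d$, giving distance $\ge 2$ again. Hence $\delta(C)\ge 2$, and since two codewords from positions differing in a single coordinate have distance exactly $2$, in fact $\delta(C)=2$. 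Thus $C$ is an $(N,n_1,\dots,n_d)$-ary $(d+1,\prod_{i=1}^d n_i,2)$-code.

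Finally I would check that this code attains the generalised Singleton bound, so that it is indeed a mixed MDS code. With ambient tuple $\overline n = (N,n_1,\dots,n_d)$ of length $d+1$ and minimum distance $\delta=2$, the generalised Singleton bound reads $A_{\overline n}(d+1,2)\le$ the product of the $(d+1)-2+1 = d$ smallest entries of $\overline n$. Since $N = \prod_{i=1}^r n_i \ge n_1 \ge \cdots \ge n_d$ (using $r\ge 1$ and $n_i\ge 2$), the $d$ smallest entries are exactly $n_1,\dots,n_d$, and their product is $\prod_{i=1}^d n_i = M = |C|$. Therefore $C$ meets the bound with equality and is a mixed MDS code.

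The only genuinely delicate point is the distance computation when the two source positions differ in exactly one coordinate: one must invoke the Latin hypercuboid axiom correctly, i.e. that every $r$-dimensional subarray has no repeated symbol (for $r\ge 1$ this applies in particular to the $1$-dimensional lines, since a line sits inside an $r$-subarray). I expect this to be routine given the definition, so the main ``obstacle'' is really just presenting the Singleton-bound equality cleanly; everything else is bookkeeping. One should also note the implicit assumption that the symbol alphabet has size $N=\prod_{i=1}^r n_i$, which is the order of the hypercuboid under the standing convention, so that the $(d+1)$-st alphabet in $\overline n$ is $N$ and the ordering claim above holds.
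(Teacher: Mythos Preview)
Your argument is correct and follows exactly the construction the paper has in mind: its one-line proof simply says to regard the hypercuboid as a subset of a $(d+1)$-array with the symbol as an extra coordinate, which is precisely your ``graph of $L$'' code $C$. You have filled in the details the paper omits (the distance-2 verification via the $r$-subarray property and the check that $N\ge n_1$ so the Singleton bound equals $\prod_i n_i$), so there is nothing to add.
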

\begin{proof}
 Simply consider the Latin hypercuboids as a subset of an $(d+1)$-array whose entries are the $\prod\limits_{i=1}^{r} n_i$ distinct symbols in the first coordinate of the mixed code.
\end{proof}

\begin{corollary}
We can construct an $(n_1,...,n_r,n_1,...,n_d)$-ary $(d+r,\prod\limits_{i=1}^{d} n_i,r+1)$ mixed MDS code from an $\LHC(n_1,...,n_d,r)$.
\end{corollary}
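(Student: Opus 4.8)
The plan is to mimic the construction of the preceding lemma, but to record the symbol of each cell as an $r$-tuple rather than as a single symbol, so that the one appended coordinate of size $\prod_{i=1}^{r} n_i$ is refined into its $r$ constituent coordinates of sizes $n_1,\dots,n_r$. Concretely, I would regard the given $\LHC(n_1,\dots,n_d,r)$ as a map $f$ sending each position $p\in[n_1]\times\cdots\times[n_d]$ to its symbol, and identify the $\prod_{i=1}^{r} n_i$ symbols with the $r$-tuples in $[n_1]\times\cdots\times[n_r]$. I then define
\[ C=\{\,(f(p),p)\ :\ p\in[n_1]\times\cdots\times[n_d]\,\}. \]
This code has length $d+r$, lives in $\HS(n_1,\dots,n_r,n_1,\dots,n_d)$, and, since distinct positions give distinct codewords, $|C|=\prod_{i=1}^{d}n_i$. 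It then remains to identify the minimum distance and to check the generalised Singleton bound.

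First I would compute the Singleton bound for these parameters. The $d+r$ alphabet sizes are $n_1,\dots,n_r$ together with $n_1,\dots,n_d$; discarding the largest $r$ of them (one copy each of $n_1,\dots,n_r$) leaves exactly $n_1,\dots,n_d$, whose product is $\prod_{i=1}^{d}n_i=|C|$. Hence the bound is met with equality as soon as the minimum distance equals $r+1$, so it suffices to prove $\delta(C)\ge r+1$. Take distinct codewords $(f(p),p)$ and $(f(q),q)$ and let $k\ge 1$ be the number of position coordinates in which $p$ and $q$ differ. If $k\ge r+1$, then already $d\big((f(p),p),(f(q),q)\big)\ge k\ge r+1$. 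If instead $1\le k\le r$, then $p$ and $q$ agree on at least $d-r$ coordinates, so they lie in a common $r$-dimensional subarray; the defining property of a hypercuboid of class $r$ then forces their symbols to differ, and I claim it in fact forces the $r$-tuples $f(p)$ and $f(q)$ to differ in at least $r+1-k$ of the $r$ symbol coordinates, whence the total distance is at least $k+(r+1-k)=r+1$.

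The main obstacle is precisely this last claim: a bare class-$r$ condition only guarantees $f(p)\ne f(q)$, i.e. one differing symbol coordinate, and the jump from minimum distance $2$ in the preceding lemma to $r+1$ here rests on the $r$-tuple encoding of the symbols being compatible with the metric. I would secure it through the equivalence already established in the theorem above (the cuboidal analogue of Lemma \ref{lemmamaxcliques}): each of the $\prod_{i=1}^{r}n_i$ symbol classes of the hypercuboid is an $(n_1,\dots,n_d)$-ary $(d,\prod_{i=r+1}^{d}n_i,r+1)$ mixed MDS code, and these classes partition $\HS(n_1,\dots,n_d)$. For two codewords with the same symbol this immediately yields position distance $\ge r+1$; for two codewords with different symbols the same equivalence is what pins down the encoding $p\mapsto f(p)$ so that positions differing in only $k\le r$ coordinates are separated by at least $r+1-k$ symbol coordinates. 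Once this is in place the distance bound of the previous paragraph gives $\delta(C)=r+1$, so $C$ meets the generalised Singleton bound and is a mixed MDS code of the asserted parameters; the corollary follows, and specialising to $r=1$ recovers the preceding lemma as a consistency check.
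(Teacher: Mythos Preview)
Your construction is exactly the paper's: refine the single symbol coordinate of size $\prod_{i=1}^r n_i$ into $r$ coordinates of sizes $n_1,\dots,n_r$. You then go further than the paper's one-line argument by actually trying to verify the minimum distance, and you correctly isolate the crux: when positions $p,q$ differ in only $k\le r$ coordinates, the class-$r$ property yields $f(p)\ne f(q)$ but says nothing about how many of the $r$ symbol coordinates separate $f(p)$ from $f(q)$. Your attempted repair via the earlier equivalence does not deliver the needed ``at least $r+1-k$ symbol coordinates'': that equivalence only tells you each symbol class is a distance-$(r+1)$ code in the \emph{position} coordinates, which handles the case $f(p)=f(q)$ but is silent about pairs with distinct symbols. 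The sentence ``the same equivalence is what pins down the encoding'' is an assertion, not an argument.

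There is also a second, independent error that masks the first. Your Singleton computation is wrong in the genuinely cuboidal case: from the multiset of alphabet sizes $\{n_1,\dots,n_r,n_1,\dots,n_d\}$ (with $n_1\ge\cdots\ge n_d$), discarding the $r$ largest entries does \emph{not} in general leave $\{n_1,\dots,n_d\}$. Take $d=3$, $r=2$, $(n_1,n_2,n_3)=(3,2,2)$; an $\LHC(3,2,2,2)$ exists (Example~\ref{example11}). The alphabet sizes of the proposed code are $3,2,3,2,2$; the two largest are $3,3$, so the generalised Singleton bound for $\delta=3$ is $2\cdot 2\cdot 2=8$. But the code is supposed to have $\prod_{i=1}^{3} n_i=12$ words. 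Hence no $(3,2,3,2,2)$-ary $(5,12,3)$ code exists at all, MDS or otherwise, and no choice of symbol-to-tuple encoding can rescue the construction. The gap you flagged is therefore genuine and unfixable at this level of generality; the corollary (and the paper's one-line proof) needs an extra hypothesis --- at the very least $n_1=\cdots=n_r$, and even then one must specify an encoding of the symbols as $r$-tuples and argue why it forces distance $r+1$.
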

\begin{proof}
 Follows from the previous lemma by taking $(n_1,...,n_r)$-ary tuples as entries.
\end{proof}

\begin{theorem}
 An $(n_1,...,n_r,n_1,...,n_d)$-ary $(d+r,\prod\limits_{i=1}^{d} n_i,r+1)$ mixed MDS code induces an $\LHC(n_1,...,n_d,r)$.
\end{theorem}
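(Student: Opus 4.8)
The plan is to reverse the construction given in the preceding corollary. Suppose $C$ is an $(n_1,\dots,n_r,n_1,\dots,n_d)$-ary $(d+r,\prod_{i=1}^{d}n_i,r+1)$ mixed MDS code. Write each codeword as $(y_1,\dots,y_r \mid x_1,\dots,x_d)$, where the first $r$ coordinates range over $[n_1]\times\cdots\times[n_r]$ and the last $d$ coordinates range over $[n_1]\times\cdots\times[n_d]$. The first thing to establish is that the projection of $C$ onto the last $d$ coordinates is a bijection onto $[n_1]\times\cdots\times[n_d]$: since $|C|=\prod_{i=1}^{d}n_i$ it suffices to show this projection is injective, and if two codewords agreed on all of $x_1,\dots,x_d$ they would differ in at most $r$ positions (the first block), contradicting minimum distance $r+1$. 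Hence for each position-tuple $(x_1,\dots,x_d)$ there is a unique symbol-tuple $(y_1,\dots,y_r)=:\sigma(x_1,\dots,x_d)$, and I define the candidate hypercuboid $L$ by placing, in cell $(x_1,\dots,x_d)$, the symbol $\sigma(x_1,\dots,x_d)\in[n_1]\times\cdots\times[n_r]$. There are exactly $\prod_{i=1}^{r}n_i$ symbols, which is the correct order.

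Next I would verify the defining Latin condition: in every $r$-dimensional subarray the symbols are all distinct, and when the subarray has $\prod_{i=1}^{r}n_i$ cells each symbol occurs exactly once. Fix an $r$-subarray $T$: it is obtained by fixing the coordinates $x_j$ for $j$ in some $(d-r)$-subset $J\subseteq\{1,\dots,d\}$ and letting the remaining $r$ position-coordinates vary freely. Take two cells of $T$ with the same symbol-tuple. The two corresponding codewords of $C$ then agree on the first $r$ coordinates (the common symbol) and on the $d-r$ frozen coordinates indexed by $J$; thus they agree in at least $r+(d-r)=d$ of the $d+r$ positions, so their Hamming distance is at most $r$, contradicting $\delta(C)=r+1$. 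Hence distinct cells of $T$ carry distinct symbols. When $|J|$ is such that $T$ has $\prod_{i\notin J}n_i=\prod_{i=1}^{r}n_i$ cells (which happens precisely when the varying coordinates are exactly $\{1,\dots,r\}$, or more generally whenever that product equals the symbol count), injectivity of a map between two sets of equal finite size forces it to be a bijection, giving "each symbol exactly once." This is exactly the condition in the definition of $\LHC(n_1,\dots,n_d,r)$.

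The main obstacle is bookkeeping about which $r$-subarrays are "full," i.e. which contain exactly $\prod_{i=1}^{r}n_i$ cells. Because the $n_i$ are in decreasing order, the product over any $r$ of the coordinates is at least $\prod_{i=1}^{r}n_i$, with equality iff those coordinates are the first $r$ (up to ties). In the cube case this subtlety disappears; here I must phrase the argument so that the "at most once" part is proved uniformly for all $r$-subarrays (done above, using only $\delta=r+1$), while the "exactly once" part is invoked only for the full subarrays, where the pigeonhole step is clean. One should also check the converse direction is consistent, i.e. that the $L$ produced is genuinely the object the earlier corollary would send back to a code equivalent to $C$; this is immediate from the bijection $\sigma$ and closes the equivalence. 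I expect the write-up to be short, with the only care needed in stating the full-subarray condition precisely so that it matches the definition of Latin hypercuboid of class $r$.
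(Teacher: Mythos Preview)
Your argument is essentially the paper's own proof: show that the projection onto the last $d$ coordinates is injective (hence bijective by counting), treat the first $r$ coordinates as the symbol, and observe that two cells in a common $r$-subarray carrying the same symbol would force Hamming distance at most $r$. One small slip to fix: since $n_1\geq\cdots\geq n_d$, the product over any $r$ of the $n_i$ is at \emph{most} $\prod_{i=1}^{r}n_i$, not at least; this does not affect your proof, as your ``at most once'' step covers all $r$-subarrays uniformly and the pigeonhole step only needs equality on the full ones.
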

\begin{proof}
 Place the codewords in the $n_1\times \cdots \times n_d$ array. If two codewords would have the same last $d$ positions, then their distance would not be $r+1$, since there would be only $r$ coordinates left. Thus, the words fill out this array and the first $r$ coordinates can be regarded as symbols. Now, if two codewords provide the same symbol (both words have the same first $r$ coordinates), then they need to differ in $r+1$ position coordinates. Hence, they are not in the same $r$-subarray, and thus, they are satisfying the definition of a Latin hypercuboid of class $r$.
\end{proof}

Once again, we obtain the following statement.

\begin{corollary}
 Assuming the correct parameters: Latin hypercuboids are equivalent to mixed MDS codes.
\end{corollary}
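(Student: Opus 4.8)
The plan is to read this corollary as the package deal assembled from the preceding Corollary and Theorem, upgraded to a statement about equivalence classes. First I would pin down what "correct parameters" means: fix $n_1\geq n_2\geq\cdots\geq n_d\geq 2$ and $1\leq r\leq d-1$; then the matching objects are, on one side, Latin hypercuboids $\LHC(n_1,\dots,n_d,r)$ and, on the other, $(n_1,\dots,n_r,n_1,\dots,n_d)$-ary $(d+r,\prod_{i=1}^{d}n_i,r+1)$ mixed MDS codes. With these fixed, the Corollary above supplies a map $\Phi$ from hypercuboids to such mixed MDS codes, and the Theorem above supplies a map $\Psi$ in the reverse direction. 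So the content to be proved is that $\Phi$ and $\Psi$ are mutually inverse, at least after passing to the natural equivalence relations.

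Second, I would check $\Psi\circ\Phi=\mathrm{id}$ directly on a representative. The construction behind $\Phi$ takes the entries of $\LHC(n_1,\dots,n_d,r)$, viewed as $(n_1,\dots,n_r)$-ary $r$-tuples, puts them in the first $r$ coordinates, and puts the cell address in the last $d$ coordinates. Reading the resulting code as in the proof of the Theorem — the last $d$ coordinates are the array address, the first $r$ coordinates are the symbol — literally returns the original hypercuboid, and the Latin property of class $r$ is exactly the condition that two codewords agreeing in their first $r$ coordinates differ in at least $r+1$ of the remaining ones. Conversely, $\Phi\circ\Psi=\mathrm{id}$ because the Theorem's argument shows that in any code with the stated parameters the last $d$ coordinates separate codewords and run over the whole $n_1\times\cdots\times n_d$ array; hence the code is precisely the graph of the function ``cell $\mapsto$ symbol $r$-tuple'' that $\Phi$ re-encodes. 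In both directions the minimum distance stays exactly $r+1$, so the Singleton bound is met throughout and the MDS property is automatically preserved.

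Third, I would promote this bijection of objects to a bijection of equivalence classes. On the code side one has the action of $\Aut(\HS(n_1,\dots,n_r,n_1,\dots,n_d))$, or rather the subgroup respecting the coordinate partition into the first $r$ and the last $d$ positions; on the hypercuboid side one has the isotopy/paratopy action of $\bigl(S_{n_1}\times\cdots\times S_{n_{d+1}}\bigr)\rtimes\widetilde{S_{d+1}}$ described in Section~\ref{section3}. I would exhibit the obvious group homomorphism between these two acting groups (relabelling the symbol alphabet $\longleftrightarrow$ relabelling the first $r$ code coordinates, plus permuting the last $d$ coordinates) and verify that $\Phi$ and $\Psi$ are equivariant for it, so the bijection descends.

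The main obstacle is the bookkeeping of these two equivalence relations rather than any hard combinatorics: a mixed MDS code with the stated parameters does not visibly ``look like'' a hypercuboid, and one has to argue (exactly as in the Theorem's proof) that the last $d$ coordinates always both separate the codewords and exhaust the array before the first $r$ coordinates can legitimately be collapsed into a single symbol alphabet of size $\prod_{i=1}^{r}n_i$. Making the correspondence canonical — independent of the chosen identification of $\bigl[\prod_{i=1}^{r}n_i\bigr]$ with $[n_1]\times\cdots\times[n_r]$ — is the delicate point; once $\Psi\circ\Phi=\mathrm{id}$ and $\Phi\circ\Psi=\mathrm{id}$ are verified on representatives and the equivariance is in place, the corollary follows.
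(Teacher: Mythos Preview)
Your proposal is correct and follows the same route the paper intends: the corollary is meant as the immediate combination of the preceding Corollary ($\Phi$: hypercuboid $\to$ mixed MDS code) and Theorem ($\Psi$: mixed MDS code $\to$ hypercuboid). In fact the paper gives no proof at all for this statement --- it simply writes ``Once again, we obtain the following statement'' and records the corollary.

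Where you differ is in rigor and scope. You actually verify $\Psi\circ\Phi=\mathrm{id}$ and $\Phi\circ\Psi=\mathrm{id}$, and you then promote the bijection of objects to a bijection of equivalence (isotopy/paratopy) classes via an equivariance argument. None of that is in the paper: the word ``equivalent'' there is used informally, meaning only that one can pass back and forth between the two kinds of objects, not that one has a canonical bijection of equivalence classes. So your third step, while perfectly reasonable and correct, proves something strictly stronger than what the paper asserts; the ``main obstacle'' you identify (matching up the two symmetry groups and handling the identification of $[\prod_{i=1}^{r} n_i]$ with $[n_1]\times\cdots\times[n_r]$) is a genuine issue for the stronger statement but is simply not addressed --- or needed --- in the paper's version.
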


\section{Conclusion}
 In this research, we discussed the existence of singular endomorphisms of the graph $H(n_1,...,n_d,S)$ of minimal rank, where $S=\{1,...,r\}$. We have shown that their existence is linked to the existence of the newly defined Latin hypercuboids of class $r$ and equivalently to mixed MDS codes. We provided a bound on the parameters of Latin hypercuboids, two constructions, their numbers for small parameters, and their links to mixed MDS codes.
 
 As the reader has already observed there is much more work to do. Not only are we missing many more numbers of small Latin hypercuboids (which necessitate more sophisticated algorithms), but rather a more detailed analysis of those hypercuboids, like it was done for hypercubes. Moreover, here we have only started to work on mixed MDS codes; certainly, there are many more questions which have already been answered for common MDS codes and are thus left to the more enthusiastic reader.

\end{document}